\theoremstyle{plain}
\newtheorem{theorem}{Theorem}[section]
\newtheorem{proposition}[theorem]{Proposition}
\newtheorem{definition}[theorem]{Definition}
\newtheorem{corollary}[theorem]{Corollary}
\newtheorem{remark}[theorem]{Remark}
\newtheorem{lemma}[theorem]{Lemma}
\newcommand{\gf}{\varphi}
\newcommand{\gs}{\sigma}
\newcommand{\gt}{\theta}
\newcommand{\rn}[1]{{\mathbb R}^{#1}}
\newcommand{\R}{\mathbb R}
\newcommand{\G}{\mathbb G}
\newcommand{\supp}{\mathrm{supp}\;}
\newcommand{\cov}[1]{{\bigwedge\nolimits^{#1}{\mfrak g}}}
\newcommand{\vet}[1]{{\bigwedge\nolimits_{#1}{\mfrak g}}}
\newcommand{\covw}[2]{{\bigwedge\nolimits^{#1,#2}{\mfrak g}}}
\newcommand{\scal}[2]{\langle {#1} , {#2}\rangle}
\newcommand{\Scal}[2]{\langle {#1} \vert {#2}\rangle}
\newcommand{\scalp}[3]{\langle {#1} , {#2}\rangle_{#3}}
\newcommand{\ccheck}{{\vphantom i}^{\mathrm v}\!\,}
\newcommand{\mc}{\mathcal }
\newcommand{\mfrak}{\mathfrak}
\begin{document}


\bigskip

\title[Primitives of volume forms in Carnot groups]{Primitives of volume forms in Carnot groups}

\author[Annalisa Baldi, Bruno Franchi, Pierre Pansu]{
Annalisa Baldi\\
Bruno Franchi\\ Pierre Pansu
}

\begin{abstract} In the Euclidean space  it is known
that a function $f\in L^2$ of a ball, with vanishing average,
is the divergence of a vector field $F\in L^2$ with
$$
\| F\|_{ L^2(B)} \le C \|f\|_{L^2(B)}.
$$
In this Note we prove a similar result in any Carnot group $\G$
for a vanishing average $f\in L^p$, $1\le p < Q$, where $Q$ is the so-called homogeneous
dimension of $\G$.

\end{abstract}
 
\keywords{Carnot groups, differential forms, Sobolev inequalities}

\subjclass{58A10,  35R03, 26D15,  
46E35}

\maketitle

\tableofcontents

\section{Introduction}

This note is motivated by a question raised by Michael Cowling \cite{C}: in \(\mathbb{R}^n\), it is known that a function \( f \in L^2(B_{\mathrm{Euc}}(0,1)) \) with vanishing average can be expressed as the divergence of a vector field \( F \in L^2(B_{\mathrm{Euc}}(0,1))^n \), satisfying
\[
\|F\|_{L^2(B_{\mathrm{Euc}}(0,1))^n} \le C \|f\|_{L^2(B_{\mathrm{Euc}}(0,1))}.
\]
The question is whether a similar result holds in Heisenberg groups, which can be identified with \(\mathbb{R}^{2n+1}\).

This problem can be rephrased in terms of Sobolev inequalities for differential forms in the Rumin complex \((E_0^\bullet, d_c)\) (see Section \ref{rumin complex} for precise definitions). Specifically, given a compactly supported volume form \(\omega = f \, dV \) with vanishing average, does there exist a $(n-1)$-compactly supported primitive \(\phi\) whose \(L^2\)-norm is controlled by the \(L^2\)-norm of $\omega$?

Sobolev inequalities for the Rumin complex in Heisenberg groups have been studied in \cite{BFP2}, but unfortunately, the results in \cite{BFP2} do not cover the case of volume forms. The aim of this paper is to fill this gap by providing a positive answer to Cowling's question. Furthermore, the results of this note are formulated in the more general setting of Carnot groups (of which Heisenberg groups are a special case), and the \(L^2\)-norms are replaced with any suitable \(L^p\)-norms.

The main result is presented in Theorem \ref{main result} in Section \ref{main result and proof} (see also Theorem \ref{main result bis} for an equivalent formulation). Section \ref{preliminar} provides some preliminary definitions, while
Section \ref{rumin complex} gives a brief introduction to Rumin's complex (for more details, see \cite{rumin_grenoble}, \cite{FS2} and \cite{BFTT}).
Finally, Section \ref{carnot kernels} is an appendix which collects various results on convolution kernels in Carnot groups, some of which are well-known.

\section{Preliminary Results and Notations} \label{preliminar}

A \textit{Carnot group} \(\G\) of step \(\kappa\) is a connected, simply connected Lie group whose Lie algebra \(\mathfrak{g}\) has dimension \(n\) and admits a \textit{step \(\kappa\) stratification}. This means there exist linear subspaces \(V_1, \dots, V_\kappa\) such that
\begin{equation}\label{stratificazione}
\mathfrak{g} = V_1 \oplus \dots \oplus V_\kappa, \quad [V_1, V_i] = V_{i+1}, \quad V_\kappa \neq \{0\}, \quad V_i = \{0\} \, \text{for} \, i > \kappa,
\end{equation}
where \([V_1, V_i]\) is the subspace of \(\mathfrak{g}\) generated by the commutators \([X, Y]\) with \(X \in V_1\) and \(Y \in V_i\). Let \(m_i = \dim(V_i)\) for \(i = 1, \dots, \kappa\), and define \(h_i = m_1 + \dots + m_i\), with \(h_0 = 0\) and, clearly, \(h_\kappa = n\). 

Choose a basis \(\{e_1, \dots, e_n\}\) of \(\mathfrak{g}\), adapted to the stratification, such that
\[
e_{h_{j-1}+1}, \dots, e_{h_j} \, \text{is a basis of} \, V_j \, \text{for each} \, j = 1, \dots, \kappa.
\]
This basis \(\{e_1, \dots, e_n\}\) will be fixed throughout this note.

Let \(X = \{X_1, \dots, X_n\}\) be the family of left-invariant vector fields such that \(X_i(0) = e_i\). Given \eqref{stratificazione}, the subset \(X_1, \dots, X_{m_1}\) generates, by commutations, all the other vector fields. We will refer to \(X_1, \dots, X_{m_1}\) as the \textit{generating vector fields} of the group. 

The Lie algebra \(\mathfrak{g}\) can be endowed with a scalar product \(\langle \cdot, \cdot \rangle\), making \(\{X_1, \dots, X_n\}\) an orthonormal basis. The group \(\G\) can be identified with its Lie algebra \(\mathfrak{g}\), endowed with the product defined by the Hausdorff-Campbell-Dynkin formula. Therefore, on \(\G\), a scalar product, still denoted by \(\langle \cdot, \cdot \rangle\), is well defined.

A point \(p \in \G\) can be written as \(p = (p_1, \dots, p_n)\) or as \(p = p^{(1)} + \dots + p^{(\kappa)}\), where \(p^{(i)} \in V_i\) for \(i = 1, \dots, \kappa\).

Two important families of automorphisms of \(\G\) are the group translations and dilations. For any \(x \in \G\), the \textit{(left) translation} \(\tau_x : \G \to \G\) is defined as
\[
z \mapsto \tau_x z := x \cdot z.
\]
For any \(\lambda > 0\), the \textit{dilation} \(\delta_\lambda : \G \to \G\) is defined as
\[
\delta_\lambda(x_1, \dots, x_n) = (\lambda^{d_1} x_1, \dots, \lambda^{d_n} x_n),
\]
where \(d_i \in \mathbb{N}\) is the \textit{homogeneity} of the variable \(x_i\) in \(\G\) (see \cite{folland_stein}, Chapter 1), and is given by
\[
d_j = i \quad \text{whenever} \, h_{i-1} + 1 \leq j \leq h_i.
\]
Hence, \(1 = d_1 = \dots = d_{m_1} < d_{m_1+1} = 2 \leq \dots \leq d_n = \kappa\).

If \(f\) is a real function defined on \(\G\), we denote by \(\ccheck f\) the function defined by \(\ccheck f(p) := f(p^{-1})\).

Following \cite{folland_stein}, we adopt the following multi-index notation for higher-order derivatives. If \(I = (i_1, \dots, i_n)\) is a multi-index, we set
\[
X^I = X_1^{i_1} \cdots X_n^{i_n}.
\]
By the Poincar\'e-Birkhoff-Witt theorem (see, e.g., \cite{bourbaki}, I.2.7), the differential operators \(X^I\) form a basis for the algebra of left-invariant differential operators on \(\G\). Moreover, we define the order of the differential operator \(X^I\) as \(|I| := i_1 + \cdots + i_n\), and its degree of homogeneity with respect to dilations as \(d(I) := d_1 i_1 + \cdots + d_n i_n\). 

%
Again, following \cite{folland_stein}, we define the group convolution in \(\G\). If \(f \in \mathcal{D}(\G)\) and \(g \in L^1_{\mathrm{loc}}(\G)\), we set
\[
f \ast g(p) := \int f(q)g(q^{-1} p) \, dq \quad \text{for} \, p \in \G.
\]
It is important to note that, if \(g\) is a smooth function and \(L\) is a left-invariant differential operator, then
\[
L(f \ast g) = f \ast Lg.
\]
The convolution is also well defined when \(f, g \in \mathcal{D}'(\G)\), provided at least one of them has compact support. In this case, the following identities hold:
\begin{equation}\label{convolutions var}
\langle f \ast g, \varphi \rangle = \langle g, \ccheck f \ast \varphi \rangle \quad \text{and} \quad \langle f \ast g, \varphi \rangle = \langle f, \varphi \ast \ccheck g \rangle
\end{equation}
for any test function \(\varphi\).

If \(f \in \mathcal{E}'(\G)\) and \(g \in \mathcal{D}'(\G)\), then for \(\psi \in \mathcal{D}(\G)\), we have
\[
\langle (X^I f) \ast g, \psi \rangle = \langle X^I f, \psi \ast \ccheck g \rangle = (-1)^{|I|} \langle f, \psi \ast (X^I \ccheck g) \rangle = (-1)^{|I|} \langle f \ast \ccheck X^I \ccheck g, \psi \rangle.
\]

Let \(1 \leq p \leq \infty\) and \(m \in \mathbb{N}\), and let \(W^{m,p}_{\mathrm{Euc}}(U)\) denote the usual Sobolev space. We also recall the definition of the (integer order) Folland-Stein Sobolev space (see, e.g., \cite{folland} and \cite{folland_stein} for a general presentation).

\begin{definition}\label{integer spaces}
If \(U \subset \G\) is an open set, \(1 \leq p \leq \infty\), and \(m \in \mathbb{N}\), the space \(W^{m,p}(U)\) consists of all \(u \in L^p(U)\) such that
\[
X^I u \in L^p(U) \quad \text{for all multi-indices \(I\) with \(d(I) \leq m\)},
\]
endowed with the norm
\[
\|u\|_{W
^{m,p}(U)} := \sum_{d(I) \leq m} \|X^I u\|_{L^p(U)}.
\]
When \(p = 2\), we will simply write \(H^m(U) = W^{m,2}(U)\).
\end{definition}

\begin{theorem}\label{folland stein varia}
Let \( U \subset \G \) be an open set, \( 1 \le p \le \infty \), and \( m \in \mathbb{N} \). Then:
\begin{itemize}
    \item[i)] \( W^{m,p}(U) \) is a Banach space.
\end{itemize}
In addition, if \( p < \infty \), the following hold:
\begin{itemize}
    \item[ii)] \( W^{m,p}(U) \cap C^\infty(U) \) is dense in \( W^{m,p}(U) \);
    \item[iii)] If \( U = \G \), then \( \mathcal{D}(\G) \) is dense in \( W^{m,p}(U) \);
    \item[iv)] If \( 1 < p < \infty \), then \( W^{m,p}(U) \) is reflexive;
    \item[v)] \( W^{m,p}_{\mathrm{Euc, loc}}(U) \subset W^{m,p}(U) \), i.e., for any \( V \subset\subset U \) and for any \( u \in W^{m,p}_{\mathrm{Euc, loc}}(U) \),
    \[
    \| u\|_{W^{m,p}(V)} \le C_V \| u \|_{W^{m,p}_{\mathrm{Euc, loc}}(U)};
    \]
    \item[vi)] \( W^{\kappa m, p}(U) \subset W^{m,p}_{\mathrm{Euc, loc}}(U) \), i.e., for any \( V \subset \subset U \) and for any \( u \in W^{\kappa m,p}(U) \),
    \[
    \| u \|_{W^{m,p}_{\mathrm{Euc}}(V)} \le C_V \| u \|_{W^{\kappa m,p}(U)}.
    \]
\end{itemize}
\end{theorem}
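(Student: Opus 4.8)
The plan is to adapt to the group setting the classical arguments for Euclidean Sobolev spaces, using throughout the two structural facts already at our disposal: left-invariant operators commute with right convolution, $L(f\ast g)=f\ast Lg$, and in the fixed adapted coordinates each generator reads $X_i=\partial_{x_i}+\sum_{d_k>d_i}p_{ik}(x)\,\partial_{x_k}$ with $p_{ik}$ a $\delta_\lambda$-homogeneous polynomial.

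For (i) I would argue by completeness. If $(u_j)$ is Cauchy in $W^{m,p}(U)$, then $(X^I u_j)_{d(I)\le m}$ is Cauchy in $L^p(U)$, so $X^I u_j\to u_I$ in $L^p(U)$; set $u:=u_{(0,\dots,0)}$. Since convergence in $L^p(U)$ entails convergence in $\mathcal D'(U)$ and each $X^I$ is continuous on $\mathcal D'(U)$, one identifies $X^I u=u_I$, whence $u\in W^{m,p}(U)$ and $u_j\to u$ in norm. Granting (i), item (iv) is immediate: the map $u\mapsto (X^I u)_{d(I)\le m}$ is an isometry of $W^{m,p}(U)$ onto a subspace of a finite product of copies of $L^p(U)$, closed by the completeness established in (i); for $1<p<\infty$ this product is reflexive and closed subspaces of reflexive spaces are reflexive.

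The density statements (ii)--(iii) rest on mollification by group convolution. Let $(\rho_\eps)$ be a smooth approximate identity supported near the origin. On $U=\G$ the identity $X^I(u\ast\rho_\eps)=(X^I u)\ast\rho_\eps$ and the $L^p$-continuity of convolution give $u\ast\rho_\eps\to u$ in $W^{m,p}$, so smooth functions are dense; multiplying by cut-offs $\chi(\delta_{1/R}\,\cdot)$ and observing that the extra terms carry derivatives $X^J\chi(\delta_{1/R}\cdot)$ of size $R^{-d(J)}$ supported in an annulus, hence tend to $0$ in $L^p$, upgrades this to density of $\mathcal D(\G)$ and proves (iii). For a general open set $U$ one localizes via a partition of unity subordinate to an exhaustion by relatively compact sets and mollifies each piece at a scale small enough to remain inside $U$ and to keep the $W^{m,p}$-error below $2^{-j}\eps$, then sums. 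This Meyers--Serrin (``$H=W$'') scheme, where the mollification scale must be reconciled uniformly with the geometry of $\partial U$ and the polynomial growth of the coefficients $p_{ik}$, is the step I expect to require the most care.

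Finally (v) and (vi) compare the intrinsic and Euclidean scales and reduce to bookkeeping on differential operators. For (v), $X^I=X_1^{i_1}\cdots X_n^{i_n}$ is a product of $|I|$ first-order operators of the form above, hence a Euclidean operator of order at most $|I|\le d(I)\le m$ with polynomial coefficients; on $V\subset\subset U$ these are bounded, so $\|X^I u\|_{L^p(V)}\le C_V\|u\|_{W^{m,p}_{\mathrm{Euc}}(V)}$ and summing over $d(I)\le m$ gives the inclusion. For (vi) I would invert the triangular relation above to write each $\partial_{x_j}$ as a polynomial-coefficient combination of $X_1,\dots,X_n$; a Euclidean derivative of order $m$ is then a polynomial-coefficient combination of products of at most $m$ of the $X_k$, and since every $d_k\le\kappa$ such a product reduces, by Poincar\'e--Birkhoff--Witt, to operators $X^I$ with $d(I)\le\kappa m$. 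Bounding the coefficients on $V$ yields $\|u\|_{W^{m,p}_{\mathrm{Euc}}(V)}\le C_V\|u\|_{W^{\kappa m,p}(U)}$, which is (vi).
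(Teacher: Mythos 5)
The paper does not actually prove this theorem: it is quoted as a known fact about Folland--Stein spaces, with references to \cite{folland} and \cite{folland_stein}, so your proposal can only be measured against the standard arguments it reconstructs. Most of it does so correctly: the completeness argument for (i), the closed-subspace-of-a-product argument for (iv), the cut-off bookkeeping $X^J(\chi\circ\delta_{1/R})=R^{-d(J)}(X^J\chi)\circ\delta_{1/R}$ for (iii), and the two triangular change-of-frame arguments for (v) and (vi) --- including the key observation that reordering a product of the $X_k$ via Poincar\'e--Birkhoff--Witt only creates commutators lying in higher layers, so the bound $d(I)\le\kappa m$ survives --- are exactly the classical proofs.

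The genuine flaw is the mollification identity on which (ii) and (iii) rest. With the paper's convention $f\ast g(p)=\int f(q)g(q^{-1}p)\,dq$, a left-invariant operator satisfies $L(f\ast g)=f\ast(Lg)$: the derivative falls on the \emph{right-hand} factor. Hence $X^I(u\ast\rho_\eps)=u\ast(X^I\rho_\eps)$, and your claimed identity $X^I(u\ast\rho_\eps)=(X^Iu)\ast\rho_\eps$ is false in any noncommutative Carnot group; what is true is $(X^Iu)\ast\rho_\eps=u\ast\bigl((X^I)^R\rho_\eps\bigr)$, where $(X^I)^R$ denotes the corresponding right-invariant derivatives --- a different object. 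The fix is one-line but essential: put the mollifier on the \emph{left}, so that $X^I(\rho_\eps\ast u)=\rho_\eps\ast(X^Iu)\to X^Iu$ in $L^p$ for $p<\infty$, group approximate identities converging from both sides. With this swap the density arguments, including the Meyers--Serrin localization for general $U$ (where one should also note that $\mathrm{supp}\,(\rho_\eps\ast v)\subset(\mathrm{supp}\,\rho_\eps)\cdot(\mathrm{supp}\,v)$, which stays inside $U$ for $\eps$ small), go through as you describe.
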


\begin{definition}\label{norminvar}
Let \( \G \) be a Carnot group. A \emph{homogeneous norm} \( \|\cdot\| \) on \( \G \) is a continuous function \( \|\cdot\| : \G \to [0, +\infty) \) such that:
\begin{equation}\label{norm1}
\begin{split}
    &\| p \| = 0 \quad \iff p = 0 \,; \\
    &\| p^{-1} \| = \| p \|; \\
    &\| \delta_\lambda(p) \| = \lambda \| p \|; \\
    &\| p \cdot q \| \le \| p \| + \| q \|;
\end{split}
\end{equation}
for all \( p, q \in \G \) and all \( \lambda > 0 \).

A homogeneous norm induces a homogeneous left-invariant distance \( d \) in \( \G \) in a standard way. If \( p \in \G \) and \( r > 0 \), we denote by \( B_d = B_d(p, r) \) the open \( d \)-ball centered at \( p \) with radius \( r \).
\end{definition}

In a Carnot group \( \G \), we shall consider in particular the  homogeneous norm defined in the following theorem.

\begin{theorem}[see \cite{FSSC_step2}]\label{distinfinity}
Let \( \G = V_1 \oplus \cdots \oplus V_\kappa \) be a Carnot group. Let \( \|\cdot\|_{V_1}, \dots, \|\cdot\|_{V_\kappa} \) be fixed Euclidean norms on the layers.

Then there exist constants \( \varepsilon_1, \dots, \varepsilon_\kappa \), with \( \varepsilon_1 = 1 \) and \( \varepsilon_2, \dots, \varepsilon_\kappa \in (0,1] \), depending only on the group \( \G \) and the norms \( \|\cdot\|_{V_1}, \dots, \|\cdot\|_{V_\kappa} \), such that the functions
\begin{equation}\label{distanzainfinito}
    \|x\|_\infty := \max_j \varepsilon_j \left( \| x^{(j)} \|_{V_j} \right)^{1/j}
\end{equation}
are homogeneous norms on \( \G \).

We denote by \( d_\infty \) the homogeneous left-invariant distance associated with \( \|\cdot\|_\infty \) and by \( B_\infty \) the metric balls of \( d_\infty \).

We stress that the balls \( B_\infty(e,r) \) are convex.
\end{theorem}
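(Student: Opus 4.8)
The plan is to check the four axioms \eqref{norm1} for $\|\cdot\|_\infty$ using the exponential coordinates in which $\G$ is identified with $\g$ through the Baker--Campbell--Hausdorff (BCH) product, so that the identity is $0$, the inverse is $x^{-1}=-x$, each layer projection $x\mapsto x^{(j)}$ is linear, and the dilation acts by $(\delta_\lambda x)^{(j)}=\lambda^{j}x^{(j)}$. In this setup the positivity and nondegeneracy axioms are immediate from $\varepsilon_j>0$; symmetry follows because $(x^{-1})^{(j)}=-x^{(j)}$ and each $\|\cdot\|_{V_j}$ is a Euclidean norm; and homogeneity follows from $\|(\delta_\lambda x)^{(j)}\|_{V_j}^{1/j}=\lambda\,\|x^{(j)}\|_{V_j}^{1/j}$ together with the fact that $\max$ commutes with scaling by $\lambda>0$. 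Thus the constants $\varepsilon_j$ are constrained only by the triangle inequality, which is the heart of the matter and where the $\varepsilon_j$ are chosen.

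For the triangle inequality I would argue layer by layer. Writing $a=\|x\|_\infty$ and $b=\|y\|_\infty$, it suffices to prove $\|(x\cdot y)^{(j)}\|_{V_j}\le \varepsilon_j^{-j}(a+b)^j$ for every $j$. By BCH the degree-$j$ component decomposes as
\[
(x\cdot y)^{(j)}=x^{(j)}+y^{(j)}+Q_j,
\]
where $Q_j$ is a sum of iterated Lie brackets of the layer components $x^{(i)},y^{(i)}$ with $i<j$, each bracket homogeneous of dilation degree $j$. The definition of $\|\cdot\|_\infty$ yields $\|x^{(j)}\|_{V_j}\le (a/\varepsilon_j)^{j}$ and $\|y^{(j)}\|_{V_j}\le (b/\varepsilon_j)^{j}$, so that
\[
\|x^{(j)}\|_{V_j}+\|y^{(j)}\|_{V_j}\le \varepsilon_j^{-j}\,(a^{j}+b^{j}),
\]
and it remains to absorb $Q_j$ into the binomial gap $\varepsilon_j^{-j}\big[(a+b)^{j}-a^{j}-b^{j}\big]$.

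The crucial algebraic input is that every Lie monomial occurring in the BCH series beyond the linear term $X+Y$ contains at least one factor $X$ and at least one factor $Y$; indeed any iterated bracket with all equal entries vanishes, while $H(X,0)=X$ and $H(0,Y)=Y$ kill the pure contributions. Expanding $X=\sum_i x^{(i)}$ and $Y=\sum_i y^{(i)}$ multilinearly, every monomial of $Q_j$ therefore carries at least one $x$-component and at least one $y$-component. Estimating each bracket by the product of the $V_i$-norms of its factors, with a constant depending only on the structure constants, and inserting $\|x^{(i)}\|_{V_i}\le (a/\varepsilon_i)^{i}$, $\|y^{(i)}\|_{V_i}\le (b/\varepsilon_i)^{i}$, one obtains
\[
\|Q_j\|_{V_j}\le E_j(\varepsilon_1,\dots,\varepsilon_{j-1})\!\!\sum_{\substack{p+q=j\\ p,q\ge 1}}\!\! a^{p}b^{q}\le E_j\big[(a+b)^{j}-a^{j}-b^{j}\big],
\]
since each monomial $a^{p}b^{q}$ with $p,q\ge1$ is one of the terms of the binomial gap. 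The factor $E_j$ is finite once $\varepsilon_1=1,\varepsilon_2,\dots,\varepsilon_{j-1}$ have been fixed, because only layers $i<j$ appear in $Q_j$. Choosing $\varepsilon_j:=\min\{1,\,E_j^{-1/j}\}$ recursively in $j$ forces $\varepsilon_j^{-j}\ge E_j$ and closes the estimate; the case $j=1$ requires no correction and is handled by $\varepsilon_1=1$. The genuine obstacle is exactly this matching between the mixed-variable structure of the BCH brackets and the binomial gap (which degenerates as $b\to 0$, precisely where the triangle inequality is sharp); everything else is bookkeeping.

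Finally, convexity of $B_\infty(e,r)$ is elementary in these coordinates. Since $\|x\|_\infty<r$ is equivalent to $\|x^{(j)}\|_{V_j}<(r/\varepsilon_j)^{j}$ holding for all $j$ simultaneously,
\[
B_\infty(e,r)=\bigcap_{j=1}^{\kappa}\big\{\,x:\ \|x^{(j)}\|_{V_j}<(r/\varepsilon_j)^{j}\,\big\},
\]
and each set on the right is the preimage of a (convex) Euclidean ball of $V_j$ under the linear projection $x\mapsto x^{(j)}$, hence convex; a finite intersection of convex sets is convex.
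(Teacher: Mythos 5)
Your proof is correct. Note that the paper does not prove this theorem at all --- it is quoted from \cite{FSSC_step2} --- and your argument is essentially the standard one from that reference: nondegeneracy, symmetry and homogeneity are immediate in exponential coordinates; the triangle inequality is obtained from the Baker--Campbell--Hausdorff expansion, using that every bracket monomial mixes $x$- and $y$-components so that it can be absorbed into the binomial gap $(a+b)^j-a^j-b^j$ by a recursive choice of $\varepsilon_j$ (the choice $\varepsilon_j=\min\{1,E_j^{-1/j}\}$, with $E_j$ depending only on the previously fixed $\varepsilon_1,\dots,\varepsilon_{j-1}$ and the structure constants, closes the estimate); and convexity of $B_\infty(e,r)$ follows since the ball is a finite intersection of preimages of Euclidean balls in the layers $V_j$ under the linear projections $x\mapsto x^{(j)}$.
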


The vectors of \( V_1 \), also called horizontal vectors, define by left translations the \textit{horizontal bundle}, which we also denote by \( V_1 \). A section of the horizontal bundle is called a horizontal vector field.

If \( F = \sum_{i=1}^{m_1} F_i \, X_i \) is a horizontal vector field,
\[
F \in L^1_{\mathrm{loc}}(\G, V_1),
\]
we define
\[
\mathrm{div}_\G F := \sum_j X_j F_j
\]
in the sense of distributions.

\section{Main result}\label{main result and proof}
The main result of this note is stated in the following theorem.
\begin{theorem}\label{main result} 
Let $d$ be a left-invariant distance on a Carnot group associated with a homogeneous norm. Suppose $1 \leq p < Q$ and $\lambda > 1$. Set $B := B_d(e,1)$ and $B' := B_d(e,\lambda)$. If $f \in L^p(B)$ is compactly supported and satisfies
\[
\int_B f(p) \, dp = 0,
\]
then there exists a compactly supported horizontal vector field $F \in L^q(B', V_1)$, where:
\begin{itemize}
    \item[i)] $1 \leq q \leq \frac{pQ}{Q-p}$ if $p > 1$, or
    \item[ii)] $1 \leq q < \frac{Q}{Q-1}$ if $p = 1$,
\end{itemize}
such that
\[
f = \mathrm{div}_\G F \quad \text{in } B.
\]
Additionally, there exists a constant $C = C(p, q, \lambda, B)$, independent of $f$, such that
\[
\|F\|_{L^q(B', \bigwedge\nolimits_{1}{\mfrak g_1})} \leq C \|f\|_{L^p(B)}.
\]
If $p > 1$ and $q = \frac{pQ}{Q-p}$, then the constant $C$ does not depend on $B$.
\end{theorem}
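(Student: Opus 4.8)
The plan is to reduce the identity $f=\mathrm{div}_\G F$ to a single scalar equation for the canonical sub-Laplacian and then to read off both existence and the quantitative bound from the mapping properties of its Green kernel. Let $\mathcal L=\sum_{j=1}^{m_1}X_j^2$ be the sub-Laplacian associated with the generating vector fields. By Folland's theorem (valid since $Q\ge 3$; the case $Q\le 2$ is the classical Euclidean one), $\mathcal L$ has a fundamental solution $G$, smooth away from the origin and positively homogeneous of degree $2-Q$, normalized so that $\mathcal L G=\delta_e$. Because $f$ is compactly supported and $G\in L^1_{\mathrm{loc}}(\G)$, the convolution $u:=f\ast G$ is well defined, and the left-invariance of $\mathcal L$ gives $\mathcal L u=f\ast(\mathcal L G)=f$ on all of $\G$.

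First I would set $F^{(0)}:=\sum_{j=1}^{m_1}(X_j u)\,X_j$, a horizontal vector field with
$$
\mathrm{div}_\G F^{(0)}=\sum_{j=1}^{m_1}X_j X_j u=\mathcal L u=f\qquad\text{on }\G .
$$
Each component equals $X_j u=X_j(f\ast G)=f\ast(X_j G)$, and $X_j G$ is smooth off the origin and homogeneous of degree $1-Q$; hence convolution against it is a fractional integration of order one. The Hardy–Littlewood–Sobolev inequality on homogeneous groups (the kernel estimates collected in the appendix) then yields, for $1<p<Q$,
$$
\|F^{(0)}\|_{L^{q_0}(\G,V_1)}\le C\,\|f\|_{L^p(B)},\qquad \frac1{q_0}=\frac1p-\frac1Q,
$$
with $C$ a \emph{universal} constant, independent of $B$ by homogeneity. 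When $p=1$ only the weak-type endpoint estimate is available, but since all functions are supported in a fixed bounded set this upgrades to a strong $L^q$ bound for every $q<\frac{Q}{Q-1}$.

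To make the primitive compactly supported I would fix $\chi\in\mathcal D(B')$ with $\chi\equiv 1$ on a neighbourhood of $\overline B$ and put $F:=\chi\,F^{(0)}$, which is supported in $B'$. Since $\nabla_H\chi$ vanishes on $B$,
$$
\mathrm{div}_\G F=\chi\,\mathrm{div}_\G F^{(0)}+\langle\nabla_H\chi,F^{(0)}\rangle=f\qquad\text{in }B ,
$$
while $\|F\|_{L^q(B',V_1)}\le\|F^{(0)}\|_{L^q(\G,V_1)}$. At the critical exponent $q=q_0$ this delivers the estimate with $C$ independent of $B$; for the remaining range $1\le q\le q_0$ (respectively $q<\frac{Q}{Q-1}$ when $p=1$) Hölder's inequality on the bounded set $B'$ interpolates down from the critical exponent, now at the cost of a constant depending on $|B'|$, hence on $B$ and $\lambda$.

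The main obstacle is the endpoint analysis: the Hardy–Littlewood–Sobolev bound is sharp exactly at $q_0=\frac{pQ}{Q-p}$ and collapses to a weak-type inequality when $p=1$, which is precisely why case (ii) must exclude the critical exponent $\frac{Q}{Q-1}$, and passing from weak to strong integrability there relies on the boundedness of the supports. A secondary point worth recording is the exact role of the hypothesis $\int_B f=0$: it is not needed to solve $\mathrm{div}_\G F=f$ in $B$ with $F$ supported in the larger ball $B'$, because the cut-off absorbs the discrepancy outside $B$; rather, it is the compatibility condition that is genuinely necessary for the equivalent formulation in which the volume form $f\,dV$ is required to have a \emph{compactly supported} primitive in the Rumin complex (integrate $d_c$ of a compactly supported form to see the constraint), and it is what makes the two statements equivalent.
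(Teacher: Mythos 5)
Your proof is correct, and it takes a genuinely different — and considerably more elementary — route than the paper's. The common ground is the kernel: your $F^{(0)}=\nabla_H(f\ast G)$ is, under Hodge duality, exactly the operator $d_c^*\Delta_{\G,n}^{-1}$ that the paper also starts from, and your quantitative estimates are precisely the appendix results (Theorem \ref{folland cont} iii) for the critical exponent with a $B$-independent constant, Lemma \ref{convolutions 2} together with Corollary \ref{marc alternative coroll} for the weak-type substitute when $p=1$, and H\"older on the bounded ball for subcritical $q$). The divergence is in how compact support is achieved. You truncate the \emph{solution}: $F=\chi F^{(0)}$, which works because the theorem only demands $\mathrm{div}_\G F=f$ \emph{in} $B$, so the error terms $(\chi-1)f$ and $\langle\nabla_H\chi,F^{(0)}\rangle$ are invisible there; as you note, the mean-zero hypothesis is then never used. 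The paper instead truncates the \emph{kernel}, $k_1=\psi_R k_1+(1-\psi_R)k_1$, obtaining a compactly supported piece $K_{1,R}\omega$ plus a smooth remainder $S\omega$, and must then correct the remainder by the support-preserving homotopy $J=\Pi_{E_0}\Pi_E J_{\mathrm{Euc}}\Pi_E$ built from the Mitrea--Mitrea--Monniaux operator and the Rumin projections; it is exactly at this correction step (Lemma \ref{homotopy 1}) that the vanishing average — inherited by $S\omega$ from $\omega$ — is indispensable. What the extra machinery buys is a strictly stronger conclusion: the paper's $\phi=(JS+K_{1,R})\omega$ satisfies $d_c\phi=\omega$ on all of $\G$, not merely in $B$, and for that global identity with compactly supported $\phi$ the mean-zero condition is genuinely necessary by Stokes. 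What your route buys is transparency: for top-degree forms the Rumin complex can be bypassed entirely, since $d_c$ from $E_0^{n-1}$ to $E_0^n$ is just the horizontal divergence and the sub-Laplacian inverts it.

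One imprecision in your closing remark: the paper's equivalent formulation (Theorem \ref{main result bis}) likewise requires $d_c\phi=\omega$ only in $B$, so the mean-zero hypothesis is not forced by that statement either; it becomes necessary only for the global equation on $\G$, which is what the paper's argument actually produces. Your construction in fact shows the stated (local) theorem holds for every compactly supported $f\in L^p(B)$, mean zero or not.
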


Our proof of Theorem~\ref{main result} involves several steps and relies on Sobolev inequalities for differential forms in Rumin's complex. In the next subsection, we recall the key features of the Rumin's complex.

\subsection{Rumin's Complex}\label{rumin complex}

Let $\mfrak g$ be the Lie algebra of the Carnot group $\G$. The dual space of $\mfrak g$ is denoted by $\cov 1$. The basis dual to $\{X_1, \dots, X_n\}$ is the family of covectors $\{\theta_1, \dots, \theta_n\}$.

Following Federer (see \cite {federer} 1.3), the exterior algebras of 
$\mfrak g$ and of $\cov 1$ are the graded algebras indicated as
$\displaystyle\vet \ast =\bigoplus_{k=0}^{n} \vet k
$
 and 
 $\displaystyle  \cov \ast 
=\bigoplus_{k=0}^{n} \cov k
$
  where
$
 \vet 0 = \cov 0 =\R
$
and, for $1\leq k \leq n$,
\begin{equation*}
\begin{split}
         \vet k& :=\mathrm {span}\{ X_{i_1}\wedge\dots \wedge X_{i_k}: 1\leq
i_1< \dots< i_k\leq n\},   \\
         \cov k& :=\mathrm {span}\{ \gt_{i_1}\wedge\dots \wedge \gt_{i_k}:
1\leq i_1< \dots< i_k\leq n\}.
\end{split}
\end{equation*}
The elements of $\vet k$ and $\cov k$ are called  \emph{$k$-vectors} and \emph{
$k$-covectors}. 

We denote by $\Theta^k$ the basis $\{ \gt_{i_1}\wedge\dots \wedge \gt_{i_k}
:
1\leq i_1< \dots< i_k\leq n\}$
of $  \cov k$.

We denote also by $dV:= \gt_{1}\wedge\dots \wedge \gt_{n}$ the volume form
associated with our adapted basis of $\mathfrak g$. Obviously,
$ \cov n :=\mathrm {span}\{ dV\}$.

The dual space $\bigwedge^1(\vet k)$ of 
$\vet k$ can be naturally identified with $\cov k$.
The action of a $k$-covector $\gf$ on a $k$-vector $v$ is denoted as $\Scal
\gf v$.

The inner product $\scalp{\cdot}{\cdot}{} $ extends canonically to 
$\vet k $ and to $\cov k$ making the bases
$X_{i_1}\wedge\dots \wedge X_{i_k}$ and $\gt_{i_1}\wedge\dots \wedge
\gt_{i_k}$ orthonormal.

\begin{definition}\label{hodge}
We define linear isomorphisms (Hodge duality: see  \cite{federer} 1.7.8) 
\begin{equation*}
\ast : \vet k \longleftrightarrow \vet{n-k} \quad \text{and} \quad
\ast : \cov k \longleftrightarrow \cov{n-k},
\end{equation*}
for $1\leq k\leq n$, putting, for $ v = \sum_I v_I X_I $ and $\gf = 
\sum_I \gf_I \gt_I $,
\[
 \ast v := \sum\nolimits_I v_I (\ast X_I )
\quad \text {and} \quad \ast\gf := \sum\nolimits_I \gf_I (\ast \gt_I )
\]
where
\[
\ast X_I := (-1)^{\gs(I)} X_{ I^\ast}\quad \text {and} \quad
\ast \gt_I := (-1)^{\gs(I)} \gt_{ I^\ast}
\]
with $I=\{ i_1, \cdots, i_k\}$, $1\leq i_1< \cdots< i_k\leq n$, 
$X_I=X_{i_1}\wedge\cdots \wedge X_{i_k}$, 
$\gt_I=\gt_{i_1}\wedge\cdots \wedge \gt_{i_k}$, $ I^\ast=\{ 
i^\ast_1<\cdots < i^\ast_{n-k}\} = \{1, \cdots, n \}\setminus 
I$ and $\gs(I)$ is the number of couples $(i_h, i^\ast_\ell)$ with 
$i_h>  i^\ast_\ell$.
\end{definition}

Notice that, if $v=v_1\wedge \dots \wedge v_k$ is a simple $k$-vector, then $\ast v$ is a simple $(n-k)$-vector.  If $v\in \vet k$ we define $v^\natural \in \cov k$ by the identity
$
\Scal {v^\natural} w := \scal v w ,
$
and analogously we define $\gf^\natural\in \vet k$ for $\gf \in \cov k$.

\begin{definition} If $\alpha\in \cov 1$, $\alpha\neq 0$, 
 we say that $\alpha$ has \emph{pure weight $k$}, and we write
$w(\alpha)=k$, if $\alpha^\natural\in V_k$. Obviously,
$$
w(\alpha)=k\quad\mbox{if and only if}\quad \alpha = \sum_{j=h_{k-1}+1}^{h_{k}}
\alpha_j\theta_j,
$$
with $\alpha_{h_{k-1}+1},\dots, \alpha_{h_{k}}\in\mathbb R$. More generally, if
$\alpha\in \cov h$, we say that $\alpha$ has pure weight $k$ if $\alpha$ is
a linear combination of covectors $\theta_{i_1}\wedge\cdots\wedge\theta_{i_h}$
with $w(\theta_{i_1})+\cdots + w(\theta_{ i_h})=k$.
\end{definition}
\begin{remark}[see \cite{BFTT}, Remark 2.6]\label{orthogonality}
If $\alpha,\beta \in \cov h$ and $w(\alpha)\neq w(\beta)$, then
$\scal{\alpha}{\beta}=0$. 

\end{remark}
We have 
\begin{equation}\label{deco cov}
\cov h = \bigoplus_{p=M_h^{\mathrm{min}}}^{M_h^{\mathrm{max}}} \covw {h}{p},
\end{equation}
where $\covw {h}{p}$ is the linear span of the $h$--covectors of weight $p$
and $M_h^{\mathrm{min}}$, $M_h^{\mathrm{max}}$ are respectively the smallest
and the largest weight of $h$-covectors.

Since the elements of the basis $\Theta^h$ have pure weights, a basis of
$ \covw {h}{p}$ is given by $\Theta^{h,p}:=\Theta^h\cap \covw {h}{p}$
(in the Section \ref{preliminar}, we called such a basis an adapted basis).

We denote by  $\Omega^{h,p} $ the vector space of all
smooth $h$--forms in $\G$ of pure weight $p$, i.e. the space of all
smooth sections of $\covw {h}{p}$. We have
\begin{equation}\label{deco forms}
\Omega^h = \bigoplus_{p=M_h^{\mathrm{min}}}^{M_h^{\mathrm{max}}} \Omega^{h,p}.
\end{equation}

\begin{lemma}\label{d0 left} We have
$d(\covw{h}{p})= \covw{h+1}{p}$, i.e., if $\alpha\in \covw{h}{p}$ is a left invariant 
$h$-form of weight p,
then $w(d\alpha)=w(\alpha)$.
\end{lemma}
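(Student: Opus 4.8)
The plan is to prove the invariance of weight under the left-invariant exterior differential by reducing everything to the action of $d$ on the dual basis covectors $\theta_j$. Since $d$ is an antiderivation and the forms in question are left-invariant with constant coefficients, it suffices by linearity and the Leibniz rule to understand $d\theta_j$ for each $j$, and then to verify that the weight is preserved on a generating set of the form $\theta_{i_1}\wedge\cdots\wedge\theta_{i_h}$.

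First I would recall the Maurer--Cartan formula: for a left-invariant $1$-form $\theta$ on a Lie group, $d\theta(X,Y) = -\theta([X,Y])$ for left-invariant vector fields $X,Y$. Applying this to our dual basis, we obtain the structure equations
\begin{equation*}
d\theta_\ell = -\sum_{i<j} c_{ij}^\ell\, \theta_i\wedge\theta_j,
\end{equation*}
where $c_{ij}^\ell$ are the structure constants defined by $[X_i,X_j]=\sum_\ell c_{ij}^\ell X_\ell$. The key arithmetic input is that the stratification \eqref{stratificazione} forces a grading compatibility on these structure constants: since $[V_a,V_b]\subseteq V_{a+b}$, a bracket $[X_i,X_j]$ can only have a nonzero component along $X_\ell$ when $d_\ell = d_i + d_j$, i.e. when $w(\theta_\ell)=w(\theta_i)+w(\theta_j)$.

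From this I would draw the central conclusion about a single covector: if $w(\theta_\ell)=k$, then in each surviving term $\theta_i\wedge\theta_j$ of $d\theta_\ell$ we have $w(\theta_i)+w(\theta_j)=w(\theta_\ell)=k$, so $d\theta_\ell$ is a sum of $2$-covectors of pure weight $k$; that is, $d$ raises the form-degree by $1$ while leaving the weight unchanged on the generators. Then for a basis monomial $\theta_I=\theta_{i_1}\wedge\cdots\wedge\theta_{i_h}$ of pure weight $p=\sum_a w(\theta_{i_a})$, the Leibniz rule gives $d\theta_I = \sum_a (-1)^{a-1}\theta_{i_1}\wedge\cdots\wedge d\theta_{i_a}\wedge\cdots\wedge\theta_{i_h}$, and each summand replaces the factor $\theta_{i_a}$ of weight $w(\theta_{i_a})$ by a $2$-covector of the same weight, so the total weight of every resulting $(h+1)$-monomial is again $p$. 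Hence $d\theta_I\in\covw{h+1}{p}$, and by linearity $d(\covw{h}{p})\subseteq\covw{h+1}{p}$.

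The only delicate point, which I expect to be the main (though modest) obstacle, is the careful bookkeeping that $w(d\theta_\ell)=w(\theta_\ell)$ rather than merely an inequality: one must confirm that the bracket-grading $[V_a,V_b]\subseteq V_{a+b}$ is an equality of degrees on the nonzero components, so that no term of a different weight can appear. This follows directly from the definition of the homogeneities $d_j$ via \eqref{stratificazione}, since the brackets respect the layer structure exactly. The claimed statement in the lemma is phrased as an equality $d(\covw{h}{p})=\covw{h+1}{p}$; the inclusion $\subseteq$ is what the above establishes, and I would note that the reverse inclusion (surjectivity onto $\covw{h+1}{p}$) is not actually needed for the applications and, strictly speaking, the substantive content is the weight-preservation $w(d\alpha)=w(\alpha)$, which the argument above yields completely.
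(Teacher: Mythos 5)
Your proof is correct and is essentially the argument the paper relies on: the paper's own ``proof'' is just a citation of Rumin (Section 2.1 of \cite{rumin_grenoble}), where weight preservation on left-invariant forms is obtained exactly as you do it, via the Maurer--Cartan structure equations and the grading compatibility of the structure constants, and your reading of the stated set equality as the inclusion (i.e.\ weight preservation, which is all that is ever used) is the intended one. The only detail worth adding is that the paper's definition of stratification asserts only $[V_1,V_i]=V_{i+1}$, so the inclusion $[V_a,V_b]\subseteq V_{a+b}$ you invoke is not literally part of the definition but follows from it by a short induction using the Jacobi identity.
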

\begin{proof} See \cite{rumin_grenoble}, Section 2.1.
\end{proof}

Let now $\alpha\in \Omega^{h,p}$ be a (say) smooth form
of pure weight $p$. We can write
$$
\alpha= \sum_{\theta^h_i\in\Theta^{h,p}}\alpha_{i}\, \theta_i^h,\quad
\mbox{with } \alpha_i\in \mc E (\G).
$$
Then
$$
d\alpha=\sum_{\theta^h_i\in\Theta^{h,p}}
\sum_j(X_j\alpha_i)\theta_j\wedge\theta^h_i +
\sum_{\theta^h_i\in\Theta^{h,p}}\alpha_id\theta^h_i.
$$
Hence we can write
$$
d=d_0+d_1+\dots+d_\kappa,
$$
where $$d_0\alpha = \sum_{\theta^h_i\in\Theta^{h,p}}\alpha_id\theta_i^h$$ does not increase the weight,
$$d_1\alpha= \sum_{\theta^h_i\in\Theta^{h,p}}\sum_{j=1}^{m_1}(X_j\alpha_i)\theta_j\wedge\theta_i^h$$
increases the weight of $1$, and, more generally,
$$d_k\alpha= \sum_{\theta^h_i\in\Theta^{h,p}}\sum_{w(\theta_j)=k}(X_j\alpha_i)\theta_j\wedge\theta_i^h
\quad k=1,\dots,\kappa.$$

In particular, $d_0$ is an algebraic operator.

\begin{definition}\label{E0}
If $0\le h\le n$ and we denote by $d_0^*$ the algebraic adjoint of $d_0$, we set
$$
E_0^h:=\ker d_0\cap\ker d_0^* = \ker d_0\cap (\mathrm{Im}\; d_0)^{\perp}
\subset \Omega^h.
$$

Since the construction of $E_0^h$ is left invariant, this space of forms
can be viewed as the space of sections of a fiber bundle, generated by
left translation and still denoted by $E_0^h$.
\end{definition}

We denote by $N_h^{\mathrm{min}}$ and $N_h^{\mathrm{max}}$ the
minimum and the maximum, respectively, of the weights of forms in $E_0^h$.

If we set $E_0^{h,p}:= E_0^h\cap \Omega^{h,p}$, then
\begin{equation*}
E_0^h = \bigoplus_{p=N_h^{\mathrm{min}}}^{N_h^{\mathrm{max}}} E_0^{h,p}.
\end{equation*}

We notice that also the space of forms $E_0^{h,p}$
 can be viewed as the space of smooth sections of a
suitable fiber bundle generated by left translations,
that we still denote by $E_0^{h,p}$.

As customary, if $\Omega\subset\G$ is an
open set, we denote by $\mc E(\Omega, E_0^h)$ the space
of smooth sections of $E_0^h$. 

The spaces $
\mc D(\Omega, E_0^h)$ and
$
\mc S(\G, E_0^h)$
are defined analogously.

Since both $E_0^{h,p}$ and $E_0^h$ are left invariant as
$\cov h$, they are subbundles of $\cov h$ and inherit the
scalar product on the fibers. 

In particular, we can obtain a left invariant orthonormal basis 
$\Xi_0^h=\{\xi_j^h\}$ of $E_0^h$ such that 
\begin{equation}\label{e0 cov basix}
\Xi^h_0 = \bigcup_{p=N_h^{\mathrm{min}}}^{N_h^{\mathrm{max}}} \Xi^{h,p}_0,
\end{equation}
where $ \Xi^{h,p}_0:= \Xi^h\cap\covw{h}{p}$ is a left invariant orthonormal basis of
$E_0^{h,p}$. All the elements of $\Xi^{h,p}_0$ have pure
weight $p$.

Once the basis $\Theta_0^h$ is chosen, the spaces 
$\mc E(\Omega, E_0^h)$,
$\mc D(\Omega, E_0^h)$,
$\mc S(\G, E_0^h)$ can be identified with
$\mc E(\Omega)^{\dim E_0^h}$,
$\mc D(\Omega)^{\dim E_0^h}$,
$S(\G)^{\dim E_0^h}$, respectively.

\begin{proposition} [\cite{rumin_grenoble}]\label{Ehodge} If $0\le h\le n$ and $*$ denote the Hodge
duality (see Definition \ref{hodge}), then
$$
*E_0^h = E_0^{n-h}.
$$
\end{proposition}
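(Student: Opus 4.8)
The plan is to deduce the statement from a single algebraic identity relating the purely algebraic operator $d_0$, its fiberwise adjoint $d_0^*$, and the Hodge star. Since $E_0^h=\ker d_0\cap\ker d_0^*$, it suffices to show that $*$ carries $\ker d_0\cap\ker d_0^*$ in degree $h$ into the analogous space in degree $n-h$; applying $*$ once more and using that $**=\pm\,\mathrm{id}$ is an isomorphism then upgrades the inclusion to an equality.

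First I would record how $*$ interacts with the weight grading. Since $\ast\gt_I=\pm\gt_{I^\ast}$ with $I^\ast$ the complementary index set and $w(dV)=Q$, the star exchanges weight $p$ with weight $Q-p$, that is $*\colon\covw{h}{p}\to\covw{n-h}{Q-p}$. As $d_0$ is the weight-preserving component of $d$ (raising the exterior degree by $1$) and $d_0^*$ its algebraic adjoint, the composite $* d_0 *$ is again weight-preserving and lowers the degree by $1$, exactly like $d_0^*$. This degree and weight bookkeeping is what makes the next identity possible.

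The heart of the matter is the codifferential formula
\[
d_0^* = (-1)^{\sigma(h,n)}\, * d_0 *,
\]
for a sign $\sigma(h,n)$ depending only on the degree and on $n$, the exact analogue on each fiber of the Riemannian relation $\delta=\pm\,*d*$. Granting this, let $\omega\in E_0^h$. Using $**=\pm\,\mathrm{id}$ together with the displayed identity one computes $d_0(*\omega)=\pm\,*(d_0^*\omega)=0$ and $d_0^*(*\omega)=\pm\,*(d_0\omega)=0$, so that $*\omega\in\ker d_0\cap\ker d_0^*=E_0^{n-h}$. Equivalently, the order-zero Laplacian $\Delta_0:=d_0 d_0^*+d_0^* d_0$ commutes with $*$ (note $\scal{\Delta_0 x}{x}=\norm{d_0 x}^2+\norm{d_0^* x}^2$, so $\ker\Delta_0=\ker d_0\cap\ker d_0^*$), whence $*$ sends the harmonic space $E_0^h=\ker\Delta_0$ onto $E_0^{n-h}$.

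I expect the only real work to lie in verifying the codifferential identity for $d_0$ rather than for the full exterior derivative $d$. The point is that $d_0$ is not a derivation but the algebraic, weight-fixing piece of the Chevalley--Eilenberg differential on left-invariant forms, so one must check that the standard $*$--$\delta$ relation survives passing to this graded summand. This follows from the weight compatibility recorded above, namely that $*$ matches the weight-$p$ part of $d^*$ with the weight-$(Q-p)$ part of $d$; a self-contained proof would either test the identity against the orthonormal basis $\Theta^{h,p}$ using $\scal{\alpha}{\beta}\,dV=\alpha\wedge *\beta$ and the explicit structure constants encoded in $d\gt_i$, or invoke the corresponding statement for $d$ and project onto weights. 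Once the sign $\sigma(h,n)$ is pinned down, the remainder is the formal Hodge-theoretic manipulation above.
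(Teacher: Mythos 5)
Your proof is correct. Note that the paper itself offers no proof of Proposition \ref{Ehodge}: the statement is simply quoted from \cite{rumin_grenoble}, and the route you take --- establish the fiberwise codifferential identity $d_0^* = \pm \ast d_0 \ast$ and observe that $\ast$ then exchanges $\ker d_0$ with $\ker d_0^*$, hence preserves $E_0^\bullet$ up to the degree flip $h \mapsto n-h$ --- is precisely the standard argument behind the cited statement. The only step you leave as a sketch, the identity for $d_0$, is indeed completable by your second suggestion, and cleanly so within this paper's framework: with the left-invariant metric making $\{X_i\}$ orthonormal, the Riemannian relation $d^* = (-1)^{n(h+1)+1}\ast d\,\ast$ holds on $h$-forms (the same sign convention the paper records for $d_c$ in Definition \ref{d*}); both sides decompose uniquely into components shifting the weight by $-k$, $k=0,\dots,\kappa$, namely $(d_k)^*$ on the left (here one uses that distinct weights are orthogonal, Remark \ref{orthogonality}) and $\ast d_k \ast$ on the right (since $\ast$ maps $\covw{h}{p}$ onto $\covw{n-h}{Q-p}$, as you checked); and since the formal adjoint of the order-zero operator $d_0$ coincides with its pointwise algebraic adjoint, equating the $k=0$ components yields the identity with a sign depending only on $h$ and $n$, as you anticipated --- and the exact sign is immaterial for the kernel argument. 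If you prefer a self-contained fiberwise proof that avoids both structure constants and the Riemannian identity: $d_0$ is an antiderivation of $\bigwedge^\ast \mathfrak{g}^*$ (it is $d$ acting on left-invariant forms), and it annihilates $(n-1)$-covectors, because $d_0$ preserves weight while every $(n-1)$-covector has weight at most $Q-1$ and every nonzero $n$-covector has weight $Q$; expanding $0 = d_0(\alpha\wedge\ast\beta)$ for $\alpha\in\bigwedge^h\mathfrak{g}^*$, $\beta\in\bigwedge^{h+1}\mathfrak{g}^*$ and using $\alpha\wedge\ast\gamma = \scal{\alpha}{\gamma}\,dV$ then gives the same identity directly.
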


By a simple linear algebra argument we can prove the following 
lemma.

\begin{lemma}\label{d_0}
If $\beta\in\Omega^{h+1}$, then there exists a unique $\alpha\in
\Omega^h\cap (\ker d_0)^\perp$ such that
$$
d_0^*d_0\alpha = d_0^*\beta.\quad\mbox{We set}\quad
\alpha :=d_0^{-1}\beta.
$$
\end{lemma}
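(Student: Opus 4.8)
The plan is to prove Lemma~\ref{d_0} by pure linear algebra on the finite-dimensional fibers, since $d_0$ is an algebraic (zeroth-order) operator acting fiberwise. First I would fix a point and work entirely within the fibers $\covw{h}{\bullet}$ and $\covw{h+1}{\bullet}$, viewing $d_0 : \cov h \to \cov{h+1}$ as a linear map between finite-dimensional inner product spaces. The key structural fact is the orthogonal decomposition
\[
\cov h = \ker d_0 \oplus (\ker d_0)^\perp,
\]
together with the dual decomposition of the target. By the standard rank--nullity / adjoint picture, $d_0$ restricts to a linear \emph{isomorphism}
\[
d_0 : (\ker d_0)^\perp \longrightarrow \mathrm{Im}\,d_0,
\]
and the self-adjoint operator $d_0^* d_0$ restricts to an \emph{invertible} operator on $(\ker d_0)^\perp$ (it is positive definite there, since $\scal{d_0^* d_0 \alpha}{\alpha} = \norm{d_0\alpha}^2 > 0$ for $\alpha \in (\ker d_0)^\perp \setminus \{0\}$).

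With this in place, existence and uniqueness follow at once. Given $\beta \in \Omega^{h+1}$, the covector $d_0^*\beta$ lies in $\mathrm{Im}\,d_0^* = (\ker d_0)^\perp$ (the image of the adjoint is the orthogonal complement of the kernel). Since $d_0^* d_0$ is invertible on $(\ker d_0)^\perp$, I set
\[
\alpha := (d_0^* d_0)^{-1} d_0^*\beta \in (\ker d_0)^\perp,
\]
which solves $d_0^* d_0 \alpha = d_0^*\beta$. For uniqueness, if $\alpha_1, \alpha_2 \in (\ker d_0)^\perp$ both solve the equation, then $d_0^* d_0(\alpha_1 - \alpha_2) = 0$ with $\alpha_1 - \alpha_2 \in (\ker d_0)^\perp$, and injectivity of $d_0^* d_0$ on that subspace forces $\alpha_1 = \alpha_2$.

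Two small points require attention but are routine. First, the construction must respect the weight grading: by Remark~\ref{orthogonality} the spaces of differing pure weight are mutually orthogonal, and $d_0$ preserves weight (it does not increase it, and being algebraic it maps $\covw{h}{p}$ into $\covw{h+1}{p}$), so $d_0^* d_0$ acts block-diagonally and the inverse can be taken weight by weight; this guarantees $\alpha$ inherits the weight structure of $\beta$. Second, because everything is left invariant and the fiber operators are smooth (indeed constant-coefficient) in the left-invariant frame, applying the construction pointwise yields $\alpha \in \Omega^h$ of the same smoothness class as $\beta$, so the fiberwise solution patches to a genuine form. The only genuinely substantive step is the verification that $d_0^* d_0$ is invertible on $(\ker d_0)^\perp$, but this is immediate from its positive-definiteness there; there is no real obstacle, which is why the authors call it ``a simple linear algebra argument.''
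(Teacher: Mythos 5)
Your proof is correct and is exactly the ``simple linear algebra argument'' the paper alludes to (the paper gives no further details): the fiberwise facts that $\mathrm{Im}\,d_0^* = (\ker d_0)^\perp$ and that $d_0^*d_0$ is positive definite, hence invertible, on $(\ker d_0)^\perp$ give existence and uniqueness at once, and left invariance of $d_0$ transfers this pointwise solution to smooth forms. No discrepancy with the paper's approach; your extra remarks on weight preservation and smoothness are sound but not needed for the statement itself.
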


\begin{remark}\label{d0 varie}
We stress that $d_0^{-1}$ is an algebraic operator, like $d_0$ and $\delta_0$,
\end{remark}

\begin{lemma}[\cite{rumin_grenoble}]\label{rumin 2.5}
The map $d_0^{-1}d$ induces an isomorphism from $\mc R (d_0^{-1})$
to itself. In addition, there exist a differential operator 
$$P=\sum_{k=1}^N(-1)^kD^k,\quad\mbox{$N\in\mathbb N$ suitable,}$$ such that
$$
P d_0^{-1}d = d_0^{-1}d P = \mathrm{Id}_{\mc R( d_0^{-1})}.
$$
We set $Q:=Pd_0^{-1}$.
\end{lemma}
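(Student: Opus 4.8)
The plan is to use the weight grading to reduce the statement to inverting an operator of the form $\mathrm{Id}+D$ with $D$ \emph{nilpotent}, and then to write the inverse as a terminating Neumann series.

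First I would collect the relevant properties of $d_0^{-1}$. By Lemma \ref{d_0}, $d_0^{-1}$ takes values in $\Omega^h\cap(\ker d_0)^\perp=\mc R(d_0^{-1})$, and on this subspace $d_0^{-1}d_0=\mathrm{Id}$, since $d_0^{-1}d_0$ is exactly the orthogonal projection onto $(\ker d_0)^\perp$. By Lemma \ref{d0 left} the algebraic operator $d_0$ preserves the weight; hence so do its adjoint $d_0^*$ and the pseudo-inverse $d_0^{-1}$. In particular $d_0^{-1}$ is a weight-preserving algebraic operator, and, since its image is $\mc R(d_0^{-1})$, the composition $d_0^{-1}d$ maps $\mc R(d_0^{-1})$ into itself. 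This already gives the first assertion, modulo invertibility.

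Next, using the decomposition $d=d_0+d_1+\cdots+d_\kappa$, I would write, on $\mc R(d_0^{-1})$,
\[
d_0^{-1}d = d_0^{-1}d_0 + d_0^{-1}(d_1+\cdots+d_\kappa) = \mathrm{Id}+D, \qquad D := d_0^{-1}(d_1+\cdots+d_\kappa).
\]
The key observation is that $D$ \emph{strictly raises the weight}: for $j\ge 1$ the operator $d_j$ increases the weight by $j\ge 1$, while $d_0^{-1}$ leaves it unchanged, so $D$ maps $\covw{h}{p}$-valued forms into forms all of whose weights exceed $p$. Because the weights of $h$-covectors only range over the finite interval from $M_h^{\mathrm{min}}$ to $M_h^{\mathrm{max}}$, iterating $D$ eventually produces weights out of range, i.e.\ $D^{N+1}=0$ for, say, $N=M_h^{\mathrm{max}}-M_h^{\mathrm{min}}$. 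Thus $D$ is nilpotent on $\mc R(d_0^{-1})$.

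Finally, nilpotency makes $\mathrm{Id}+D$ invertible through the terminating Neumann series
\[
P := \sum_{k=0}^{N}(-1)^k D^k,
\]
a differential operator polynomial in $D$ (of the form appearing in the statement), which therefore commutes with $\mathrm{Id}+D=d_0^{-1}d$; this yields both $P\,d_0^{-1}d=\mathrm{Id}_{\mc R(d_0^{-1})}$ and $d_0^{-1}d\,P=\mathrm{Id}_{\mc R(d_0^{-1})}$ at once, and in particular shows $d_0^{-1}d$ is an isomorphism of $\mc R(d_0^{-1})$. Since each $d_j$ is a left-invariant differential operator and $d_0^{-1}$ is algebraic, $D$ and hence $P$ are differential operators; setting $Q:=Pd_0^{-1}$ finishes the proof. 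The one genuinely substantive point is the nilpotency of $D$, which relies entirely on the strict weight increase of $d_1,\dots,d_\kappa$ (Lemma \ref{d0 left}) together with the finiteness of the weight range; everything else is formal linear algebra. The only care point is confirming that $d_0^{-1}$ preserves the weight, so that $D$ does raise it.
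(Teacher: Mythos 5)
The paper itself gives no proof of this lemma---it is quoted directly from \cite{rumin_grenoble}---and your argument is exactly the standard one from that source: on $\mc R(d_0^{-1})=(\ker d_0)^\perp$ one has $d_0^{-1}d=\mathrm{Id}+D$ with $D=d_0^{-1}(d_1+\cdots+d_\kappa)$ strictly weight-raising, hence nilpotent by finiteness of the weight range, so the terminating Neumann series inverts it; this is correct, including the one point you flag (that $d_0^{-1}$ preserves weight, which follows from Remark \ref{orthogonality} together with the fact that $\ker d_0$ and $(\ker d_0)^\perp$ are weight-graded). One detail worth recording: your series correctly starts at $k=0$, whereas the lemma as printed has $P=\sum_{k=1}^N(-1)^kD^k$; without the identity term one gets $(\mathrm{Id}+D)P=-D+(-1)^ND^{N+1}=-D$, not the identity, so the statement's lower index is a typo that your proof silently corrects.
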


\begin{remark}\label{P and weights}
If  $\alpha$ has pure weight $k$, then $P\alpha$
is a sum of forms of pure weight  greater or equal to $k$.
\end{remark}

We state now the following key results. 
\begin{theorem}[\cite{rumin_grenoble}]\label{main rumin bis}
The de Rham complex $(\Omega^*,d)$ 
splits as the direct sum of two sub-complexes $(E^*,d)$ and
$(F^*,d)$, with
$$
E:=\ker d_0^{-1}\cap\ker (d_0^{-1}d)\quad\mbox{and}\quad
F:= \mc R(d_0^{-1})+\mc R (dd_0^{-1}),
$$
such that
\begin{itemize}
\item[i)] The projection $\Pi_E$ on $E$ along $F$ is given by $\Pi_E=Id-Qd-dQ$.
In particular, $\Pi_E$ is a differential operator of order $s\ge 0$ in the horizontal derivatives,
where $s$ depends on $\G$ and on the degree of the forms  it acts on.
\item[ii)] If $\;\Pi_{E_0}$ is the orthogonal projection from $\Omega^*$
on $E_0^*$, then $\Pi_{E_0}\Pi_{E}\Pi_{E_0}=\Pi_{E_0}$ and
$\Pi_{E}\Pi_{E_0}\Pi_{E}=\Pi_{E}$.
\end{itemize}
 \end{theorem}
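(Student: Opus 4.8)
The plan is to treat the statement as essentially linear algebra over the fiber, the only substantive input being the weight-raising nilpotency packaged in Lemma~\ref{rumin 2.5}. First I would record the elementary consequences of the fiberwise Hodge decomposition $\Omega^h = \mathrm{Im}\, d_0 \oplus E_0^h \oplus \mathrm{Im}\, d_0^*$ attached to the algebraic operator $d_0$ and its partial inverse $d_0^{-1}$: namely $\mc R(d_0^{-1}) = \mathrm{Im}\, d_0^*$, $\ker d_0^{-1} = \ker d_0^*$, $d_0^{-1}d_0^{-1} = 0$, the identity $d_0 d_0^{-1} + d_0^{-1}d_0 = \mathrm{Id} - \Pi_{E_0}$, and the orthogonality $E_0 \perp \mathrm{Im}\, d_0^*$. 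Together with Lemma~\ref{rumin 2.5}, which gives $Qd = \mathrm{Id}$ on $\mc R(d_0^{-1})$ and the $P$-invariance of $\mc R(d_0^{-1})$ (so that $\mc R(Q) \subseteq \mc R(d_0^{-1})$), these facts turn every assertion into an operator identity involving $d$, $Q$, $d_0^{-1}$ and $d^2 = 0$.

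Writing $R := dQ + Qd$, the first step is the formal observation that $\Pi_E = \mathrm{Id} - R$ is a chain map: from $d^2 = 0$ one gets $dR = dQd = Rd$, hence $d\Pi_E = \Pi_E d$. Idempotency of $\Pi_E$ amounts to $R^2 = R$, which I would derive from two identities. First, $Q^2 = Pd_0^{-1}Pd_0^{-1} = 0$, since $P$ maps into $\mathrm{Im}\, d_0^* \subseteq \ker d_0^{-1}$. Second, $QdQ = Q$, since $\mc R(Q) \subseteq \mc R(d_0^{-1})$ and $Qd$ is the identity there. Using $Q^2 = 0$ and $d^2 = 0$ the cross terms in $R^2$ drop and $R^2 = dQdQ + QdQd$, which equals $R$ by $QdQ = Q$. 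Being an idempotent chain map, $\Pi_E$ realizes $\Omega = E \oplus F$ with $E = \mathrm{Im}\,\Pi_E$ and $F = \ker\Pi_E$ both $d$-invariant; this is the splitting into subcomplexes, and it identifies $\Pi_E$ as the projection onto $E$ along $F$. Assertion (i) follows, the order statement being read off from the fact that $Q$ is a polynomial in $d_0^{-1}d$ composed with $d_0^{-1}$, hence a differential operator in the horizontal derivatives.

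The technical heart is to match $\mathrm{Im}\,\Pi_E = \ker R$ with $E = \ker d_0^{-1}\cap\ker(d_0^{-1}d)$ and $\ker\Pi_E = \mathrm{Im}\, R$ with $F = \mc R(d_0^{-1}) + \mc R(dd_0^{-1})$. The inclusion $E \subseteq \ker R$ is immediate, since $d_0^{-1}x = 0$ kills $Qx$ and hence $dQx$, while $d_0^{-1}dx = 0$ kills $Qdx$. For the converse I would apply $d_0^{-1}$ to $Rx = 0$: using $d_0^{-1}Q = d_0^{-1}Pd_0^{-1} = 0$ this gives $d_0^{-1}dQx = 0$, and the injectivity of $d_0^{-1}d$ on $\mc R(d_0^{-1})$ from Lemma~\ref{rumin 2.5} forces first $Qx = 0$, hence $d_0^{-1}x = 0$, and then $d_0^{-1}dx = 0$. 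For $F$, one inclusion is by inspection of the ranges of $dQ$ and $Qd$, and the reverse follows by checking that $R$ fixes the generators $d_0^{-1}\beta$ and $dd_0^{-1}\beta$, again via $Qd = \mathrm{Id}$ on $\mc R(d_0^{-1})$ and $d^2 = 0$.

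For assertion (ii) I would argue on $E_0$ and on $E$ separately. On $\eta \in E_0 \subseteq \ker d_0^{-1}$ one has $Q\eta = 0$, so $R\eta = Qd\eta \in \mc R(Q) \subseteq \mathrm{Im}\, d_0^*$, which is orthogonal to $E_0$; hence $\Pi_{E_0}\Pi_E\eta = \eta$ for $\eta \in E_0$, i.e.\ $\Pi_{E_0}\Pi_E\Pi_{E_0} = \Pi_{E_0}$. On $y \in E$ one has $d_0^{-1}y = 0$, so $d_0 d_0^{-1} y = 0$ and $(\mathrm{Id} - \Pi_{E_0})y = d_0^{-1}d_0 y \in \mc R(d_0^{-1}) \subseteq F = \ker\Pi_E$; therefore $\Pi_E\Pi_{E_0}y = \Pi_E y = y$, i.e.\ $\Pi_E\Pi_{E_0}\Pi_E = \Pi_E$. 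The main obstacle here is not any single computation but the clean setup of the algebra of $d_0^{-1}$ and $P$ — above all the injectivity of $d_0^{-1}d$ on $\mc R(d_0^{-1})$, which rests on the weight-raising nilpotency behind Lemma~\ref{rumin 2.5} — after which every assertion reduces to the orthogonality $E_0 \perp \mathrm{Im}\, d_0^*$ and to $d^2 = 0$.
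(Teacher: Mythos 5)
Your proposal is correct, but there is nothing in the paper to compare it against line by line: Theorem \ref{main rumin bis} is quoted from \cite{rumin_grenoble} and the paper gives no proof of it, so what you have written is in effect a reconstruction of Rumin's own argument, and a sound one. The fiberwise facts you start from ($\mc R(d_0^{-1})=\mathrm{Im}\,d_0^*$, $\ker d_0^{-1}=\ker d_0^*$, $d_0^{-1}d_0^{-1}=0$, $d_0d_0^{-1}+d_0^{-1}d_0=\mathrm{Id}-\Pi_{E_0}$, $E_0\perp\mathrm{Im}\,d_0^*$) all follow from Lemma \ref{d_0} and Definition \ref{E0}, and the chain of identities $Q^2=0$, $QdQ=Q$, $R^2=R$, $\ker R=E$, $\mathrm{Im}\,R=F$, and the two compositions in (ii) all check out. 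Two points deserve to be made explicit rather than asserted. First, the invariance $P\bigl(\mc R(d_0^{-1})\bigr)\subseteq\mc R(d_0^{-1})$, on which you lean for $\mc R(Q)\subseteq\mc R(d_0^{-1})$, for $Q^2=0$, and for the injectivity steps ($Qx=0\Rightarrow d_0^{-1}x=0$), is not literally part of the statement of Lemma \ref{rumin 2.5}; it does follow from it in one line, since $d_0^{-1}d$ maps $\mc R(d_0^{-1})$ onto itself and $Pd_0^{-1}d=\mathrm{Id}_{\mc R(d_0^{-1})}$, whence $P\bigl(\mc R(d_0^{-1})\bigr)=P\bigl(d_0^{-1}d(\mc R(d_0^{-1}))\bigr)=\mc R(d_0^{-1})$; that derivation should appear, as without it several of your steps are unjustified. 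Second, the closing claim of (i) is glossed too quickly: knowing that $Q$ is a polynomial in $d_0^{-1}d$ composed with $d_0^{-1}$ does not by itself yield a differential operator ``in the horizontal derivatives,'' because $d=d_0+d_1+\dots+d_\kappa$ contains derivatives along all layers; one must also use the stratification $[V_1,V_i]=V_{i+1}$ to rewrite the non-horizontal derivatives as iterated commutators of horizontal ones, which is exactly where the order $s$, depending on $\G$ and on the degree, comes from. Both are minor, fixable gaps; the architecture of your argument is precisely that of \cite{rumin_grenoble}, i.e.\ pure linear algebra over the fiber plus the weight-raising nilpotency packaged in Lemma \ref{rumin 2.5}.
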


\begin{theorem}[\cite{rumin_grenoble}]\label{main rumin}
If we set 
$$d_c:=\Pi_{E_0}\, d\,\Pi_{E},$$
 then $d_c: E_0^h\to E_0^{h+1}$ satisfies
\begin{itemize}
\item[i)] $d_c^2=0$;
\item[ii)] the complex $E_0:=(E_0^*,d_c)$ is exact;
\end{itemize}
\end{theorem}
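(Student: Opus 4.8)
The plan is to reduce the two assertions to the subcomplex $(E^*,d)$, exploiting that $\Pi_E$ and $\Pi_{E_0}$ identify $E_0^*$ with $E^*$. Set $\phi:=\Pi_E|_{E_0^*}:E_0^*\to E^*$ and $\psi:=\Pi_{E_0}|_{E^*}:E^*\to E_0^*$. The two relations of Theorem~\ref{main rumin bis}~ii) read, after restriction, $\psi\phi=\mathrm{Id}_{E_0^*}$ and $\phi\psi=\mathrm{Id}_{E^*}$ (for $\alpha\in E_0^*$ one has $\psi\phi\,\alpha=\Pi_{E_0}\Pi_E\Pi_{E_0}\alpha=\Pi_{E_0}\alpha=\alpha$, and symmetrically on $E^*$), so $\phi$ and $\psi$ are mutually inverse isomorphisms. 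Since $E$ is a $d$-subcomplex of $(\Omega^*,d)$ by Theorem~\ref{main rumin bis}, we have $d\,\Pi_E=\Pi_E\,d\,\Pi_E$, and hence on $E_0^*$ the operator $d_c=\Pi_{E_0}\,d\,\Pi_E$ equals the conjugate $\psi\circ(d|_{E^*})\circ\phi$.

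With this in hand, i) is formal. I would first verify that $\phi$ is a chain map: for $\alpha\in E_0^*$, $\phi\,d_c\alpha=\Pi_E\Pi_{E_0}(d\,\Pi_E\alpha)=d\,\Pi_E\alpha=d\,\phi\alpha$, where the middle step uses $d\,\Pi_E\alpha\in E^*$ and $\phi\psi=\mathrm{Id}_{E^*}$. Then $d_c^2=\psi\,d\,\phi\,\psi\,d\,\phi=\psi\,d\,(\phi\psi)\,d\,\phi=\psi\,d^2\,\phi=0$, proving i). The identity $\phi\,d_c=d\,\phi$ shows moreover that $\phi$ is an isomorphism of cochain complexes from $(E_0^*,d_c)$ onto $(E^*,d)$, with inverse the chain map $\psi$; consequently ii) is equivalent to the exactness of $(E^*,d)$.

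It remains to establish that $(E^*,d)$ is exact, which is the substantive point. By Theorem~\ref{main rumin bis} the de Rham complex splits as a direct sum of subcomplexes $(\Omega^*,d)=(E^*,d)\oplus(F^*,d)$, whence $H^*(\Omega^*)\cong H^*(E^*)\oplus H^*(F^*)$. The complementary projector $\Pi_F:=\mathrm{Id}-\Pi_E$ commutes with $d$, restricts to the identity on $F^*$, and equals $dQ+Qd$ by Theorem~\ref{main rumin bis}~i); therefore $\Pi_F$ is null-homotopic and induces the zero map on $H^*(\Omega^*)$. But with respect to the above splitting $\Pi_F$ induces the projection onto $H^*(F^*)$, so $H^*(F^*)=0$, i.e. $F$ is acyclic. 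Hence $H^*(E^*)\cong H^*(\Omega^*)=H^*_{\mathrm{dR}}(\G)$, which vanishes in positive degrees since $\G$ is diffeomorphic to $\R^n$ (Poincar\'e lemma). Thus $(E^*,d)$ is exact in positive degrees, and transporting this through the isomorphism $\phi$ yields the exactness of $(E_0^*,d_c)$ asserted in ii).

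I expect the exactness in ii) to be the only real obstacle. The algebraic statements --- $d_c^2=0$ and the chain isomorphism --- fall out formally once Theorem~\ref{main rumin bis} is granted. The crux is the vanishing $H^*(F^*)=0$, for which the explicit homotopy furnished by $Q=Pd_0^{-1}$, via $\Pi_F=dQ+Qd$, is decisive, together with the topological triviality of $\G$ (or, if one prefers local/sheaf-level exactness, the Poincar\'e lemma applied on contractible neighborhoods).
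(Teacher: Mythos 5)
Your proposal is correct, but there is nothing in the note to compare it against: Theorem \ref{main rumin} is stated as a quotation from \cite{rumin_grenoble}, and the paper gives no proof of it. What you have done is make that citation self-contained modulo the other quoted results (Theorem \ref{main rumin bis} and Remark \ref{cor main rumin}), and your argument is essentially the one in Rumin's paper. The individual steps all check out: the identities $\psi\phi=\mathrm{Id}_{E_0^*}$ and $\phi\psi=\mathrm{Id}_{E^*}$ do follow from $\Pi_{E_0}\Pi_E\Pi_{E_0}=\Pi_{E_0}$ and $\Pi_E\Pi_{E_0}\Pi_E=\Pi_E$ restricted to the respective subspaces; the identification $d_c=\psi\circ d|_{E^*}\circ\phi$ uses only that $(E^*,d)$ is a subcomplex, and then $d_c^2=\psi\,d\,(\phi\psi)\,d\,\phi=\psi\,d^2\phi=0$ is formal; for exactness, $\Pi_F=\mathrm{Id}-\Pi_E=Qd+dQ$ is a chain map (commutation with $d$ follows from the splitting into subcomplexes, or from Remark \ref{cor main rumin}) which is null-homotopic and restricts to the identity on $F^*$, so $H^*(F^*)=0$; since cohomology of a direct sum of subcomplexes is the direct sum of the cohomologies, $H^*(E^*)\cong H^*_{\mathrm{dR}}(\G)$, which vanishes in positive degrees because the exponential map makes $\G$ diffeomorphic to $\R^n$, and conjugation by $\phi$ transports this to $(E_0^*,d_c)$. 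The benefit of your derivation is that the reader needs to import from \cite{rumin_grenoble} only the splitting theorem, the homotopy $Q$, and the projector identities, rather than the exactness statement itself.

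Two minor remarks. First, ``exact'' in the statement must be read as exactness in positive degrees (equivalently, exactness of the augmented complex $0\to\R\to E_0^0\to E_0^1\to\cdots\to E_0^n\to 0$), since $\ker d_c\cap E_0^0$ consists of the constants; this is precisely what your argument delivers, so the reading is consistent with the theorem as intended. Second, your closing parenthetical about local, sheaf-level exactness is not needed and would require extra care (the homotopy $Q$ is a global left-invariant differential operator, and localizing it is not immediate); the global argument you gave is complete as it stands for the statement on $\G$.
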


In particular, if $h=0$ and $f\in E_0^0=\mc E(\G)$, then $d_cf = \sum_{i=1}^m (X_i f) \theta_i^1$ is the
{\sl horizontal differential of $f$}.

In addition, by Proposition \ref{Ehodge}, $E_0^n = \{f\, dV,\; f\in\mc E(\G)\}$.

 \begin{remark}[see \cite{BFTT} Remark 2.17]\label{cor main rumin} We have
 \begin{equation}\label{cor main rumin 1}
 d\,\Pi_E = \Pi_E d.
 \end{equation}
 \end{remark}

It follows from Proposition 2.18 of \cite{BFTT} that, if $\alpha\in E_0^h$ has weight
$p$, then
$$\Pi_E\alpha = \alpha +
\mbox{terms of weight greater than $p$.}$$
\begin{remark}\label{PiE on top degree} In particular, if $\alpha\in E_0^n$ (and therefore has weight $Q$), then
$\Pi_E\alpha = \alpha$, since there are no forms of weight $> Q$. 
\end{remark}
\begin{definition}\label{d*}
We denote by $d_c^*$ the $L^2$-(formal) adjoint of $d_c$.
We recall that on $E_0^h$
$$
d_c^* = (-1)^{n(h+1)+1} \ast d_c \ast.
$$
\end{definition}

\subsection{Equivalent formulation and proof of Theorem \ref{main result}}

Let us start by noticing that $d_c^*$ on 1-forms can be identified
with the horizontal divergence. Indeed,
If $F=\sum_{i=1}^{m_1}F_i \,X_i \in L^1_{\mathrm{loc}}(\G, V_1)$, we denote by
$
F^\sharp
$
the differential 1-form defined by
$$
\Scal{F^\natural}{V} = \scal{F}{V} = \sum_j\int_\G F_j V_j\, dp
$$
for any $V=\sum_{i=1}^{m_1}V_i\, X_i \in \mc D(\G, V_1)$,
i.e.
$$
F^\natural = \sum_i F_i \, \theta_i.
$$
 
 If now $\phi\in \mc D(\G)$, then (keeping in mind that
$d_c^* F^\natural$ is a 0-form)
\begin{equation}\label{july 25 eq:1bis}\begin{split}
\int_\G & \phi \, d_c^* F^\natural \, dp = \int_\G \scal{F^\natural}{ d_c\phi}\, dp
\\&
= \sum_j \int_\G F_j\, X_j \phi = - \int_\G \phi \,\mathrm{div}_\G F,
\end{split}\end{equation}
where the above identities are meant in the sense of distributions.
Hence $f= \mathrm{div}_\G F$ if and only if $d_c^* F^\natural= f $, i.e.
$$
-\ast d_c \ast  F^\natural= f .
$$
Applying Hodge operator to identity, and keeping in mind that $d_c \ast  F^\natural$ is a $n-$form and hence $\ast\ast=Id$,   we obtain
$$
-\ast \ast d_c \ast  F^\natural=  \ast f ,
$$
i.e.
$$
d_c (-\ast  F^\natural)=   f\, dV.
$$
If we set $\phi:=- \ast  F^\natural$ and $\omega:=f\, dV$, an equivalent formulation of Theorem \ref{main result} becomes:

\begin{theorem} \label{main result bis} Let $d$ be a left invariant distance on a Carnot
group associated with a homogeneous norm. Let $1\le p<Q$ and $\lambda>1$,
and set $B:=B_d(e,1)$ and $B':= B_d(e,\lambda)$. If
$\omega \in L^p(B,E_0^n)$ is compactly supported and satisfies
$$
\int_{B} \omega = 0,
$$
then there exists a compactly supported differential form 
$\phi\in L^{q}(B', E_0^{n-1})$ with
\begin{itemize}
\item[i)] $1\le q \le pQ/(Q-p)$ if $p>1$ 
\end{itemize}
 or
\begin{itemize}
\item[ii)] $1\le q < Q/(Q-1)$ if $p=1$,
\end{itemize}
so that 
$$
d_c\phi = \omega\qquad \mbox{in $B$}.
$$

In addition, there exists $C=C(p,q,\lambda,B)$ independent of $\omega$ such that
$$
\| \phi \|_{L^{q}(B', E_0^{n-1})}
\le C \|\omega\|_{L^p(B,E_0^n)}.
$$
If $p>1$ and $q = pQ/(Q-p)$, the constant does not depend on $B$.

\end{theorem}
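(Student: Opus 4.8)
The plan is to produce the primitive $\phi$ by a single global convolution (homotopy) operator coming from Rumin's complex, to estimate it by a Hardy--Littlewood--Sobolev inequality for homogeneous kernels, and then to cut it off inside $B'$; the vanishing of $\int_B\omega$ is what guarantees that the cut-off error can be reabsorbed. Throughout I may assume, after replacing $d$ by the equivalent distance $d_\infty$ of Theorem~\ref{distinfinity} and absorbing the comparison constants into $C(p,q,\lambda,B)$, that the balls in play are the convex balls $B_\infty$.

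By Proposition~\ref{Ehodge}, $E_0^n=\{f\,dV\}$ and $\ast E_0^1=E_0^{n-1}$; since $E_0^1$ consists only of weight-one covectors, $E_0^{n-1}$ has pure weight $Q-1$, so $d_c\colon E_0^{n-1}\to E_0^n$ raises the weight by one and is a homogeneous, first-order horizontal operator. Consequently $d_c^*\colon E_0^n\to E_0^{n-1}$ is first order and, since $d_c$ vanishes on $E_0^n$ (there is no $E_0^{n+1}$), the Rumin Laplacian on the top degree is $\Delta:=d_cd_c^*$, a positive, left-invariant, second-order hypoelliptic operator, homogeneous of degree $2$. By the results on convolution kernels collected in Section~\ref{carnot kernels} (Folland's theorem), $\Delta$ has a left-invariant fundamental solution $G$, homogeneous of degree $2-Q$ (for $Q\ge3$; the case $Q=2$ is handled by the logarithmic kernel, of which only horizontal derivatives are used). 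Setting
\[
\phi_0:=d_c^*\bigl(G\ast\omega\bigr),
\]
the identity $\Delta(G\ast\omega)=\omega$ together with $d_c\omega=0$ gives $d_c\phi_0=d_cd_c^*(G\ast\omega)=\omega$ on all of $\G$.

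The kernel of the operator $\omega\mapsto\phi_0$ is $d_c^*$ applied to $G$, hence homogeneous of degree $(2-Q)-1=1-Q$. By the Hardy--Littlewood--Sobolev inequality for such kernels (Section~\ref{carnot kernels}), for $1<p<Q$ this operator maps $L^p(\G,E_0^n)$ into $L^{pQ/(Q-p)}(\G,E_0^{n-1})$ with a dilation-invariant constant depending only on $p$ and $\G$; for $p=1$ a kernel of homogeneity $1-Q$ is only of weak type $(1,Q/(Q-1))$, which after Hölder's inequality on the bounded set $B'$ yields the bound for every $q<Q/(Q-1)$ and explains the strict inequality in case~(ii). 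Restricting to $B'$ and using Hölder's inequality to pass, when $q<pQ/(Q-p)$, from the critical exponent to $q$, we obtain
\[
\|\phi_0\|_{L^q(B',E_0^{n-1})}\le C\,\|\omega\|_{L^p(B,E_0^n)};
\]
at the critical exponent $q=pQ/(Q-p)$ no such Hölder step is needed, so by dilation invariance the constant is independent of the radius, i.e.\ of $B$.

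It remains to make the primitive compactly supported. Choosing $\chi\in\mc D(B')$ with $\chi\equiv1$ on a neighbourhood of $\overline B$ and putting $\phi:=\chi\phi_0$, the locality of the differential operator $d_c$ gives $d_c\phi=d_c\phi_0=\omega$ on the set $\{\chi\equiv1\}\supset B$, while $\phi$ is supported in $B'$ and inherits the estimate above. To obtain in addition a primitive solving $d_c\phi=\omega$ throughout $B'$ (the form of the result matching Cowling's question) one must correct the error $\eta:=d_c(\chi\phi_0)-\omega$, which is supported in the annulus $B'\setminus B$, is $d_c$-closed, and --- by integration by parts in the Rumin complex and the hypothesis $\int_B\omega=0$ --- satisfies $\int\eta=0$; this last fact is exactly what allows $\eta$ to be primitivized with support in a slightly larger ball by a Bogovskii-type (compactly supported homotopy) operator whose construction relies on the convexity of the balls $B_\infty$ of Theorem~\ref{distinfinity}. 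The main obstacle, and where the real work lies, is the sharp mapping properties of the homotopy operator above: establishing the Hardy--Littlewood--Sobolev estimate for its $(1-Q)$-homogeneous kernel together with the correct endpoint behaviour at $p=1$, and, for the strengthened conclusion, the $L^p\to L^q$ boundedness of the Bogovskii-type corrector on the annulus --- it is precisely here that the hypothesis $\int_B\omega=0$ is indispensable.
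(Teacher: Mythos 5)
Your proposal is correct for the statement as written, and its first half coincides with the paper's argument: both invert $d_c$ in top degree by the type-$1$ kernel $k_1=d_c^*\Delta_{\G,n}^{-1}$ (your $d_c^*(G\ast\cdot)$), and both get the $L^p\to L^q$ bounds from Folland's convolution theorem for kernels of type $1$ together with the weak-type endpoint at $p=1$ (Theorem \ref{folland cont} and Lemma \ref{convolutions 2}); you would also need the routine density argument justifying $d_c\phi_0=\omega$ distributionally for merely $L^p$ data, the analogue of parts iii)--iv) of Lemma \ref{bologna 1 ottobre}. Where you genuinely diverge is the support step. The paper splits $k_1=\psi_R k_1+(1-\psi_R)k_1$, so that the near part $K_{1,R}$ moves supports only by $R$ while the far part, after applying $d_c$, gives a smoothing operator $S$; the smooth remainder $S\omega$ inherits the vanishing average of $\omega$ and is then given a compactly supported primitive by $J=\Pi_{E_0}\circ\Pi_E\circ J_{\mathrm{Euc}}\circ\Pi_E$, built from the Mitrea--Mitrea--Monniaux Euclidean homotopy operator, the infinite smoothing of $S$ absorbing the derivative loss caused by $\Pi_E$; this yields $d_c\phi=\omega$ on all of $\G$. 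You instead cut off the global primitive, $\phi:=\chi\phi_0$ with $\chi\equiv 1$ near $\overline B$, and locality of $d_c$ gives $d_c\phi=\omega$ in $B$, which is all the statement literally asks; the norm estimates, including the $B$-independence of the constant at the critical exponent, pass through since $|\chi|\le 1$. What each approach buys: yours is far shorter and, tellingly, never uses the hypothesis $\int_B\omega=0$ --- which is indeed superfluous for the literal formulation, because the equation is only required in $B$ while the support may spill into $B'$. The paper's machinery buys the strictly stronger conclusion that the equation holds on $B'$ (in fact on all of $\G$), for which the vanishing average is genuinely necessary, since $\int_\G d_c\phi=0$ for any compactly supported $\phi$. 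Your closing sketch of a Bogovskii-type corrector for the annulus error $\eta=d_c(\chi\phi_0)-\omega$ is exactly the nontrivial step that the paper's $J$-operator performs; as you yourself note, you do not construct such an operator, but it is not needed for the statement as given.
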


\medskip

Since different homogeneous norms are equivalent
(\cite{BLU}, Proposition 5.1.4),
without loss of generality from now on we may assume that $d=d_\infty$
and 
 for sake of simplicity, we shall write $B(p,r)$ for $B_\infty(p,r)$.
 
 \medskip

The first step in order to prove Theorem \ref{main result bis} will be to define an operator acting on
$n$-forms which inverts Rumin's differential $d_c$ (albeit with a loss of regulaity)
.
Inspired by the work of \cite{IL},  Mitrea, Mitrea and Monniaux,  in \cite{mitrea_mitrea_monniaux}, define a compact homotopy operator $J_{\mathrm{Euc},h}$ 
in Lipschitz star-shaped  domains in Euclidean space $\rn {n}$, providing an explicit representation formula
for $J_{\mathrm{Euc},h}$, together with continuity properties among Sobolev spaces. Since in this Note we are
interested on forms of top degree $n$, we recall what Theorem 4.1 of \cite{mitrea_mitrea_monniaux}
states only in this particular case.  Theorem 4.1 of \cite{mitrea_mitrea_monniaux} says that if $D\subset \rn {N}$ is 
a star-shaped Lipschitz domain, then  there exists
$$
J_{\mathrm{Euc},h} : L^{p}(D, {\bigwedge}\vphantom{!}^n) \to W^{1,p}_{\mathrm{Euc}}(D, {\bigwedge}\vphantom{!}^{n-1}) 
\hookrightarrow W^{\kappa,p}(D, E_0^{n-1}) 
$$
such that
\begin{equation}\label{july 27 eq:4}
\omega = dJ_{\mathrm{Euc},n}\omega + \big(\int_D \omega\big)\theta \, dV
 \qquad \mbox{for all $\omega\in 
\mc D(D, {\bigwedge}\vphantom{!}^n)$,}
\end{equation}
where $\theta\in\mc D(\G)$ satisfies
$$
\int_\G \theta\, dp=1.
$$
Furthermore, $J_{\mathrm{Euc},n}$ maps smooth compactly supported forms to smooth compactly supported forms.

For the sake of simplicity, from now on we drop the index $n$ - the degree of the form -
writing, e.g., $J_{\mathrm{Euc}}$ instead of $J_{\mathrm{Euc},n}$.

\medskip

To our aim, take now $D=B$. If $\omega\in \mc D(B,E_0^n)$,  with
vanishing average, we set
\begin{eqnarray}\label{may 31 eq:2}
J=\Pi_{E_0}\circ \Pi_E \circ J_{\mathrm{Euc}}   \circ \Pi_E.
\end{eqnarray}

Since $\Pi_E\omega= \omega$ on $E_0^n$, on $E_0^n$ we can also write
\begin{eqnarray}\label{may 31 eq:2 bis}
J\omega=\Pi_{E_0}\circ \Pi_E \circ J_{\mathrm{Euc}}\omega.
\end{eqnarray}
Then $J$ inverts Rumin's differential $d_c$ on forms of degree $n$ in the sense of the following result.

\begin{lemma}\label{homotopy 1} If $\alpha\in E_0^n$ is a compactly supported smooth form  in 
a ball $\tilde B$
with
$$
\int_{\tilde B} \alpha = 0,
$$
 then
\begin{equation}\label{homotopy closed}
\alpha = d_cJ\alpha.
\end{equation}
In addition, $J\alpha$ is  compactly supported in $\tilde B$.
\end{lemma}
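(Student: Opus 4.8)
The plan is to reduce the target identity $\alpha = d_c J\alpha$ to a short chain of operator identities, in which the only genuinely analytic input is the Euclidean homotopy formula \eqref{july 27 eq:4}. The strategy exploits three facts recorded above: that on top-degree forms we may use the simplified expression $J\alpha = \Pi_{E_0}\Pi_E J_{\mathrm{Euc}}\alpha$ from \eqref{may 31 eq:2 bis}; that $d$ commutes with $\Pi_E$ by Remark \ref{cor main rumin}; and that the projection identities of Theorem \ref{main rumin bis}(ii) hold as operator identities on the whole of $\Omega^\ast$, not merely on $E_0^\ast$. The vanishing-average hypothesis is what lets \eqref{july 27 eq:4} recover $\alpha$ exactly from the exterior derivative of its Euclidean primitive.

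Concretely, I would first set $\eta := J_{\mathrm{Euc}}\alpha$, a smooth compactly supported $(n-1)$-form, so that $J\alpha = \Pi_{E_0}\Pi_E\eta$ lies in $\mc E(\tilde B, E_0^{n-1})$. Since $J\alpha \in E_0^{n-1}$, the definition $d_c = \Pi_{E_0}\,d\,\Pi_E$ gives $d_c J\alpha = \Pi_{E_0}\,d\,\Pi_E\Pi_{E_0}\Pi_E\eta$. The identity $\Pi_E\Pi_{E_0}\Pi_E = \Pi_E$ collapses the middle to $d_c J\alpha = \Pi_{E_0}\,d\,\Pi_E\eta$, and the commutation $d\,\Pi_E = \Pi_E\,d$ of Remark \ref{cor main rumin} turns this into $d_c J\alpha = \Pi_{E_0}\Pi_E\,d\eta$. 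At this point the homotopy formula enters: since $\alpha$ is compactly supported in $\tilde B$ with $\int_{\tilde B}\alpha = 0$, formula \eqref{july 27 eq:4} reads $\alpha = d\eta$, whence $d_c J\alpha = \Pi_{E_0}\Pi_E\alpha$. Finally, $\alpha \in E_0^n$ has weight $Q$, so $\Pi_E\alpha = \alpha$ by Remark \ref{PiE on top degree}, and $\Pi_{E_0}\alpha = \alpha$ because $\alpha$ already lies in $E_0^n$; hence $d_c J\alpha = \alpha$.

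For the support assertion I would argue that each factor in the composition is support-nonincreasing inside $\tilde B$. The operator $J_{\mathrm{Euc}}$ maps smooth compactly supported forms to smooth compactly supported forms, and, by star-shapedness of the convex ball $\tilde B$ (Theorem \ref{distinfinity}), keeps the support inside $\tilde B$; the operator $\Pi_E = \mathrm{Id} - Qd - dQ$ is a differential operator and $\Pi_{E_0}$ is algebraic (pointwise), so both are local and cannot enlarge support. Therefore $J\alpha = \Pi_{E_0}\Pi_E\eta$ remains compactly supported in $\tilde B$.

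The main obstacle I anticipate is bookkeeping rather than a deep difficulty: one must be careful to apply each of the identities $\Pi_E\Pi_{E_0}\Pi_E = \Pi_E$ and $d\,\Pi_E = \Pi_E\,d$ to the \emph{full} $(n-1)$-form $\eta$ (which need not lie in $E_0^{n-1}$), and to confirm that the Euclidean primitive $\eta$ is genuinely compactly supported in $\tilde B$ so that the local operator $\Pi_E$ does not spill support beyond $\tilde B$. Once these two points are checked, the computation is a direct concatenation of the cited identities.
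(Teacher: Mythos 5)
Your proposal is correct and follows essentially the same argument as the paper's own proof: the identical chain $d_cJ\alpha=\Pi_{E_0}d\,\Pi_E\Pi_{E_0}\Pi_E J_{\mathrm{Euc}}\alpha=\Pi_{E_0}d\,\Pi_E J_{\mathrm{Euc}}\alpha=\Pi_{E_0}\Pi_E d J_{\mathrm{Euc}}\alpha=\Pi_{E_0}\Pi_E\alpha=\alpha$, using $\Pi_E\Pi_{E_0}\Pi_E=\Pi_E$, the commutation $d\,\Pi_E=\Pi_E d$, the Euclidean homotopy formula with vanishing average, and $\Pi_E\alpha=\alpha$ on top-degree forms. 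The support argument (locality of $\Pi_E$ and $\Pi_{E_0}$ plus compact-support preservation by $J_{\mathrm{Euc}}$) is also the paper's.
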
 

\begin{proof} 

By \eqref{july 27 eq:4},
\begin{equation}\label{may 4 eq:2}
\alpha = dJ_{\mathrm{Euc}}  \alpha.
\end{equation}

We recall now that $\Pi_E\Pi_{E_0} \Pi_E=\Pi_E$ and $\Pi_{E_0}\Pi_E \Pi_{E_0}=\Pi_{E_0}$.
In addition, on forms of degree $n-1$, $d\Pi_E=\Pi_Ed$.
Thus, by \eqref{may 4 eq:2},
 \begin{eqnarray*}
d_cJ\alpha&=&\Pi_{E_0}d \Pi_E\Pi_{E_0} \Pi_E  J_{\mathrm{Euc}}   \alpha=\Pi_{E_0}d \Pi_E J_{\mathrm{Euc}}  \alpha  
\\
&=&\Pi_{E_0} \Pi_E d J_{\mathrm{Euc}}   \alpha=\Pi_{E_0} \Pi_E\alpha ={ \Pi_{E_0}  \alpha}
=\alpha,
\end{eqnarray*}
since $\alpha\in E_0^n$.
Finally, if $\mathrm{supp}\,  \alpha
\subset \tilde B$, then $\mathrm{supp}\, J \alpha
\subset \tilde B$ since both $\Pi_E$ and $\Pi_{E_0}$ preserve the support.
\end{proof}

Unfortunately, the operator $J$ contains the differential operator $\Pi_E$
that yields a loss of regularity. We can get rid of this inconvenient combining
$J$ with a smoothing operator coming from an approximated homotopy
formula. The approximated homotopy
formula is based on a global homotopy identity relying on
 the inverse of Rumin's Laplacian.

Indeed, if $\omega=fdV\in \mc D(\G, E_0^n)$, we can define its sub-Laplacian as
$$
 \Delta_{\G,n} \omega:= d_cd_c^*  \omega.
 $$
Since $\ast\ast = Id $ on $n$-forms,
$$
 \Delta_{\G,n} \omega = \ast  \Delta_{\G,0} \ast \omega,
$$
and the fundamental solution  $\Delta_{\G,n}^{-1}$ of $ \Delta_{\G,n}$ is given by
$$
\Delta_{\G,n}^{-1} = \ast \Delta_{\G,0}^{-1}\ast
$$
that is associated with a kernel of type 2 (see \cite{folland}).

We are now able to prove the equivalent formulation of Theorem \ref{main result} arguing as in Theorem 5.12 of \cite{BFP2}.

\begin{proof}[Proof of Theorem \ref{main result bis}]

Suppose first that $\omega\in \mc D(B, E_0^n)$.  If $\omega$ is continued
 by zero on all of $\G$, we notice preliminarily
that
\begin{equation*}
\omega =  
 \Delta_{\G,n}  \Delta^{-1}_{\G,n}\omega = d_c (d_c^*  \Delta^{-1}_{\G,n})\omega,
\end{equation*}
where $d_c^* \Delta^{-1}_{\G,n}$ is
 associated with a matrix-valued kernel $k_1$ of type $1$
 acting on $f$.
 Keeping in mind that, by Hodge duality, $\omega$ can be identified
 with the function $f$, without loss of generality, we can treat $k_1$ as it were a scalar kernel.
 We consider a cut-off function $\psi_R$ supported in a $R$-neighborhood
of the origin, such that $\psi_R\equiv 1$ near the origin. We can write  
\begin{equation}\label{morti}
k_1=\psi_R  k_1+ (1-\psi_R)k_1.
\end{equation}
Since the kernel of $\Delta^{-1}_{\G,n}$ is of type 2,
the kernel $\psi_R  k_1$ belongs to $L^1(\G)$.
 Let us denote by $K_{1,R}$ the convolution operator associated with
$\psi_R k_1$ and by $S$ is the convolution operator associated with the kernel
\begin{equation}\label{santi}
K_S:=  d_c( (1-\psi_R)k_1 ) .
\end{equation}
It follows from \eqref{morti} that
\begin{equation}\label{sept 9 eq:1}
\omega = d_c K_{1,R} \omega + S\omega
\qquad\mbox{if $\omega\in \mc D(B,E_0^n)$.}
\end{equation}

The kernel $K_S$ is smooth. We stress also that $\mathrm{supp}\, K_{1,R} \omega$ is contained in
a $R$-neighborhood of $B$ so that
\begin{equation}\label{july 26 eq:3}
\mathrm{supp}\, K_{1,R} \omega \subset B'
\end{equation}
provided $R=R(\lambda)<d(B, \partial B')$. By \eqref{sept 9 eq:1}, also 
\begin{equation}\label{july 30 eq:1}
\mathrm{supp}\, S \omega \subset B'.
\end{equation}
Finally, by \eqref{sept 9 eq:1}, $S\omega \in E_0^n$.

The homotopy formula \eqref{sept 9 eq:1} still holds in the
sense of distributions when $\omega\in L^p$. To prove that,
 we need the following lemma.

\begin{lemma}\label{bologna 1 ottobre} Being $S$ and $K_{1,R}$ defined as above, we have:
\begin{itemize}
\item[i)] $S$ is regularizing from $\mc E'(\G)$ to
$\mc E(\G)$.
In addition, if $p,q\ge 1$ and $m\in \mathbb N\cup\{0\}$
then $S$ can be continued as a bounded map from $L^p(B,E_0^n)\cap\mc E'(B,E_0^n)$ to $W^{m,q}(B',E_0^n)$
$$
S:L^p(B,E_0^n) \to W^{m,q}(B',E_0^n).
$$
In particular, by Thorem \ref{folland stein varia}, vi),
due to the arbitrariness of the choice of $m$, we also have 
$$
S:L^p(B,E_0^n) \to W^{m,q}_{\mathrm{Euc}}(B',E_0^n);
$$

\item[ii)] if $p\ge 1$, the map $K_{1,R}$ can be continued  as a bounded map  
from $L^p(B,E_0^n)\cap\mc E'(B,E_0^n)$ to $L^p(B',E_0^n)$;
\item[iii)] if $p>1$ then the map $K_{1,R}$ can be continued  as a bounded map  
from $L^p(B,E_0^n)\cap\mc E'(B,E_0^n)$ to $W^{1,p}(B',E_0^n)$
and the identity \eqref{morti} still holds for $\omega\in L^p(B,E_0^n)\cap
\mc E'(B,E_0^n)$;
\item[iv)] the identity \eqref{morti} still holds for $\omega\in L^1(B,E_0^n)\cap
\mc E'(B,E_0^n)$ in the sense of distributions;
\item[v)] if $p>1$, then $K_{1,R}: L^p(B,E_0^n)\cap
\mc E'(B,E_0^n) \to L^{q}(B',E_0^{n-1})$ for $p\le q\le Q/(Q-1)$;
\item[vi)] $K_{1,R}: L^1(B,E_0^n)\cap
\mc E'(B,E_0^n) \to L^{q}(B',E_0^{n-1})$ for $1\le q<Q/(Q-1)$.
\end{itemize}
\end{lemma}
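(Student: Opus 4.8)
The plan is to read off all six statements from the explicit structure of the two convolution kernels. The operator $K_{1,R}$ is convolution with $\psi_Rk_1$, which near the origin coincides with the type~$1$ kernel $k_1$ (homogeneous of degree $1-Q$) and is cut off away from the origin; thus $\psi_Rk_1\in L^1(\G)$, and more precisely $\psi_Rk_1\in L^s(\G)$ for every $1\le s<Q/(Q-1)$, because $\|p\|^{(1-Q)s}$ is integrable near $0$ exactly in that range. The operator $S$ is convolution with $K_S=d_c\big((1-\psi_R)k_1\big)$, and the decisive structural fact is that this kernel is smooth \emph{and} compactly supported. Indeed, on top degree $\Delta_{\G,n}=d_cd_c^*$, so the identity $\omega=d_c(d_c^*\Delta_{\G,n}^{-1})\omega$ used above says exactly that $d_ck_1=\delta_0\,\mathrm{Id}$ as a convolution kernel; hence
$$
K_S=d_ck_1-d_c(\psi_Rk_1)=\delta_0-d_c(\psi_Rk_1),
$$
which vanishes for $\|p\|>R$ (there $\psi_R\equiv0$ while $d_ck_1=0$ off the origin) and also on the ball where $\psi_R\equiv1$ (there $\psi_Rk_1=k_1$, so $d_c(\psi_Rk_1)=\delta_0$). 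Therefore $K_S\in\mc D(\G)$, supported in a fixed annulus.

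Granting this, part~ii is Young's inequality, $\|\omega*\psi_Rk_1\|_{L^p}\le\|\omega\|_{L^p}\,\|\psi_Rk_1\|_{L^1}$, the support statement being \eqref{july 26 eq:3}. For part~i, since $K_S\in\mc D(\G)$, the convolution $S\omega=\omega*K_S$ is smooth for $\omega\in\mc E'(\G)$; for any left-invariant $X^I$ one has $X^I(S\omega)=\omega*(X^IK_S)$ with $X^IK_S\in\mc D(\G)$, so Hölder's inequality gives $\|X^I(S\omega)\|_{L^\infty}\le\|\omega\|_{L^p}\|X^IK_S\|_{L^{p'}}$ and, $B'$ being bounded, $\|S\omega\|_{W^{m,q}(B')}\le C\|\omega\|_{L^p(B)}$ for all $m,q$. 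Applying this with $\kappa m$ in place of $m$ and invoking Theorem~\ref{folland stein varia}(vi) yields the map into $W^{m,q}_{\mathrm{Euc}}(B')$. In both parts the passage from $\mc D(B)$ to $L^p(B)\cap\mc E'(B)$ is by density, the bounds being uniform.

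The quantitative properties of $K_{1,R}$ split into two regimes. For part~iii I would differentiate the kernel, $X_j(\psi_Rk_1)=(X_j\psi_R)\,k_1+\psi_R\,(X_jk_1)$: the first summand is a compactly supported $L^1$ kernel, contributing an $L^p\to L^p$ operator by Young; the second is a cut-off \emph{type~$0$} kernel, since a horizontal derivative lowers the homogeneity of $k_1$ to $-Q$, so convolution with it is a Calder\'on--Zygmund singular integral, bounded on $L^p$ for $1<p<\infty$ in the Folland--Stein calculus once the cancellation condition is verified. Together these give $K_{1,R}\colon L^p\to W^{1,p}(B')$ for $p>1$. For the fractional-integration parts~v and~vi I would instead use $\psi_Rk_1\in L^s$ for all $s<Q/(Q-1)$ together with Young: for $p=1$ this gives $L^1\to L^s\subset L^q(B')$ for every $q<Q/(Q-1)$ (part~vi), while for $p>1$, choosing $s$ close to $Q/(Q-1)$, it gives a bounded map $L^p\to L^q(B')$ for all $p\le q\le Q/(Q-1)$, the endpoint being admissible precisely because $Q/(Q-1)<pQ/(Q-p)$ when $p>1$ (part~v); the reduction to smaller exponents is free since $B'$ is bounded.

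Finally, the homotopy identity \eqref{sept 9 eq:1} is extended from $\mc D(B)$ to $L^p(B)\cap\mc E'(B)$ by density. For $p>1$ all three terms converge in $L^p$ (using parts~i and~iii), so the identity holds as an $L^p$ equality (part~iii); for $p=1$ the term $d_cK_{1,R}\omega$ converges only in $\mc D'(\G)$, so the identity holds in the sense of distributions (part~iv). I expect the main obstacle to be the $W^{1,p}$ bound of part~iii: it is the one place where a genuine singular-integral theorem is invoked, and it requires recognizing $\psi_R\,X_jk_1$ as an admissible type~$0$ kernel and checking the mean-value hypothesis that makes the Folland--Stein $L^p$-boundedness ($1<p<\infty$) applicable, the endpoints being genuinely excluded. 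A secondary but structurally essential point is the smoothness and compact support of $K_S$, on which the whole force of part~i depends.
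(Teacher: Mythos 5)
Your proof is correct for the lemma as stated, and parts i)--iv) follow essentially the paper's own scheme (kernel splitting, Young's inequality, differentiation of the truncated kernel, density), but you diverge in two genuinely different ways. First, you establish the stronger structural fact that $K_S=d_c\big((1-\psi_R)k_1\big)$ is not merely smooth but belongs to $\mc D(\G)$, supported in an annulus, via the distributional identity $d_ck_1=\delta_0$. The paper never uses this: it handles supports by inserting a cut-off $\chi\equiv 1$ on $B(e,1+\lambda)$ in part i) and by deducing $\supp S\omega\subset B'$ from the homotopy formula \eqref{sept 9 eq:1}. Your observation is valid and simplifies both part i) (H\"older against a compactly supported smooth kernel) and the first term in part iii), where the paper instead pushes $(X\psi_R)k_1$ through Lemma \ref{truncation} up to the exponent $pQ/(Q-p)$ and then back down to $L^p$ using the boundedness of $B'$, while you just note it is a $\mc D(\G)$ kernel and apply Young. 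Also, the ``cancellation condition'' you flag for $\psi_R Xk_1$ needs no separate verification in the paper's framework: Proposition \ref{kernel} ii) makes $Xk_1$ a kernel of type $0$, and Lemma \ref{truncation} gives $L^p$-boundedness of its truncation.

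The second divergence is in parts v) and vi), where you replace the paper's tools --- Lemma \ref{truncation} (truncated fractional integration) for v) and Remark \ref{truncation 2} (the weak-type $M^{Q/(Q-1)}$ estimate) for vi) --- by the elementary fact that $\psi_Rk_1\in L^s(\G)$ for all $1\le s<Q/(Q-1)$, combined with Young. This proves v) and vi) exactly as written, but with a real trade-off: Young with $s<Q/(Q-1)$ can only ever give $K_{1,R}:L^p\to L^q$ for $q<pQ/(Q-p)$, so you reach the stated endpoint $q=Q/(Q-1)$ only because $Q/(Q-1)<pQ/(Q-p)$ when $p>1$ (and for $p>Q/(Q-1)$ the stated range is empty). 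The paper's route through Lemma \ref{truncation} yields the full range $p\le q\le pQ/(Q-p)$, with a constant independent of $B$ at the endpoint; this is what \eqref{pq estimates} needs for Theorem \ref{main result bis} to hold up to $q=pQ/(Q-p)$, and it strongly suggests the bound $Q/(Q-1)$ in the statement of v) is a typo for $pQ/(Q-p)$. So your argument suffices for the lemma as literally stated, but it would not support the main theorem at its endpoint exponent; there the Folland--Stein fractional integration theorem is genuinely needed.
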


\begin{proof} Let us prove i). Since the kernel $K_S$ is smooth
and the convolution maps $\mc E'(\G)\times \mc E(\G)$
into $\mc E(\G)$, the operator $S$ is regularizing from $\mc E'(\G)$ to
$\mc E(\G)$ (see \cite{laurent_schwartz}, p.167). In addition,
since $B$ is bounded, then without  loss of generality we may assume
that $p=1$.

Remember $\omega = fdV$; hence we can identify $\omega$ and 
the scalar function $f$. 
We have:
\begin{equation*}\begin{split} 
\| S\omega & \|_{W^{m,q}(B',E_0^n)} = \| \omega\ast K_S\|_{W^{m,q}(B')}
\\&
= \sum_{d(I) \le m} \|\omega \ast X^I  K_S \|_{L^q(B')}  
\\&
=  \sum_{d(I) \le m}
\Big(\int_{B'} \big( \int_B |\omega(y)|\,| X^I K_S(y^{-1}x)|  \, dy\big)^q \, dx\Big)^{1/q}.
\end{split}\end{equation*}
Notice now that, if $x\in B'$ and $y\in B$, then $y^{-1}x\in B(e,1+\lambda)$. Thus,
if $\chi\in\mc D(\G)$ is a cut-off function, $\chi\equiv 1$ on $B(e,1+\lambda)$, then
$\chi  X K_S\in L^q(\G)$, so that, by Young's inequality (see Theorem \ref{folland cont}, i)), 
Proposition 1.18)
\begin{equation*}\begin{split}
\| S\omega & \|_{W^{m,q}(B',E_0^n)} \le
 \sum_{d(I) \le m}
  \| |\chi \omega(y)|\ast  |X^I K_S|  \|_{L^q(\G)}
\\&
\le C \| \omega(y) \|_{L^1(\G)} = C \| \omega(y) \|_{L^1(B)}.
\end{split}\end{equation*}

Proof of ii). By a similar argument
\begin{equation*}\begin{split}
\| K_{1,R}\omega & \|_{L^p(B',E_0^n)} \le
\| \omega\ast \psi_Rk_1  \|_{L^p(B',E_0^n)} 
\\&
\le \Big(\int_{B'} \big( \int_B  |\omega(y)|\, |\psi_Rk_1(y^{-1}x) | \, dy\big)^p \, dx\Big)^{1/p}
\\&
\le C  \|\psi_Rk_1 \|_{L^1(B(1+\lambda))}  \|\omega\|_{L^p(B,E_0^n)}.
\end{split}\end{equation*}

Proof of iii). Let $X$ be a horizontal derivative. Then,
 we have only to estimate
the $L^p$-norm of 
$$
X(\omega\ast\psi_R k_1) = \omega\ast (X\psi_R) k_1 + \omega\ast(\psi_R Xk_1).
$$
By Lemma \ref{truncation}
\begin{equation*}\begin{split}
\| \omega\ast &(X\psi_R) k_1\|_{L^p(B')}\le C \| \omega\ast (X\psi_R) k_1\|_{L^{pQ/(Q-p)}(B')}
\\&
\le C \| \omega\ast (X\psi_R) k_1\|_{L^{pQ/(Q-p)}(\G )}
\le C \| \omega\|_{L^{p}(\G )} = C \| \omega\|_{L^{p}(B )};
\end{split}\end{equation*}
analogously, since $Xk_1$ is a kernel of type 0,
$$
\| \omega\ast(\psi_R Xk_1)\|_{L^p(B')}\le 
 C \| \omega\|_{L^{p}(\G )}
\le C \| \omega\|_{L^{p}(B )}.
$$
Finally, since $\omega$ is compactly supported
in $B$, it can be approximated in $L^p(B)$ by a sequence $(\omega_k)_{k\in\mathbb N}$ in 
$\mc D(B)$. Thus 
$$
d_cK_{1,R}\omega_k \to d_cK_{1,R}\omega\qquad\mbox{in $L^p(\G)$ as $k\to\infty$.}
$$
In addition, by  i),
$$
S\omega_k \to S\omega \qquad\mbox{in $L^p(\G)$ as $k\to\infty$,}
$$
and iii) is proved

Proof of iv).  Take a sequence $(\omega_k)_{k\in\mathbb N}$ as in the proof of iii). By ii) 
$$
K_{1,R}\omega_k \to K_{1,R}\omega\qquad\mbox{in $L^p(\G)$ as $k\to\infty$.}
$$
In particular, $d_cK_{1,R}\omega_k \to d_cK_{1,R}\omega$ in the sense of distributions.
Then iv) follows from \eqref{morti}.

Proof of v). The statement follows by Lemma \ref{truncation}.

Proof of vi). The statement follows by Remark \ref{truncation 2}

\end{proof}

Let us resume the proof of Theorem \ref{main result}.
Since $S$ is a smoothing operator, then $S\omega\in 
\mc D(B',E_0^n)$, keeping also in mind 
that $S\omega$ is supported in $B' $ (see \eqref{july 30 eq:1}).

 We notice also that for any $p\ge 1$, 
 $S\omega$ 
 has vanishing average, since $\omega $ has vanishing average.
Indeed, take $\chi\in D(\G)$, $\chi\equiv 1$ on $B'$.  Again identify  $\omega=fdV$ with 
the scalar function $f$, we have, by Lemma \ref{bologna 1 ottobre} iii) and iv),
that the homotopy formula \eqref{sept 9 eq:1} holds in the sense of distributions.
Therefore
\begin{equation*}\begin{split}
\int_{B'} & S\omega \, dV = \int_{B'} \chi S\omega\, dV 
\\&
= \int_{\G}  \chi \omega\, dV + \int_\G (d_c\chi) \wedge K_{1,R}\omega
=0,
\end{split}\end{equation*}
since $d_c\chi=0$ on $\supp K_{1,R}\omega$.

Since $S\omega$
has vanishing average,  we can apply \eqref{homotopy closed} to $\alpha:=S\omega$
and we get $S\omega = d_cJS\omega$, where $J$ is defined in \eqref{may 31 eq:2}. By Lemma \ref{homotopy 1},
 $JS\omega$ is supported in $B'$. Thus, if we set
$\phi:= (JS+K_{1,R})\omega$, then $\phi$ is supported in $B'$. Moreover $d_c\phi = d_cJS\omega + d_cK_{1,R}\omega = S\omega +
\omega - S\omega = \omega$. 

Remember now that, by Theorem \ref{main rumin bis}-i), $\Pi_E$ on forms of degree $(n-1)$
 is a differential operator of order $s\ge 0$ in the horizontal
derivatives. Thus, by Lemma \ref{bologna 1 ottobre},
\begin{equation}\label{pq estimates}\begin{split}
\|\phi & \|_{L^q(B',E_0^{n-1})}  \le \|JS\omega\|_{L^q(B',E_0^{n-1})}+ \| K_{1,R}\omega\|_{L^q(B',E_0^{n-1})} 
\\&
\le  \|JS\omega\|_{L^q(B',E_0^{n-1})}+C \|   \omega\|_{L^p(B',E_0^{n})}
\\&
\le  \|S\omega\|_{W^{s-1,q}_{\mathrm{Euc}}(B',E_0^{n-1})}+C \|   \omega\|_{L^p(B',E_0^{n})}
\\&
\le  \|S\omega\|_{W^{s,q}_{\mathrm{Euc}}(B',E_0^{n-1})}+C \|   \omega\|_{L^p(B',E_0^{n})}
\\&
\le C\big(  \|S\omega\|_{W^{\kappa s,q}(B,E_0^{h})}+ \|   \omega\|_{L^p(B',E_0^{n})}
\big)
\\&
\le C \|   \omega\|_{L^p(B',E_0^{n})}.
\end{split}\end{equation}

This completes the proof of the theorem.

\end{proof}

\section{Appendix A: kernels in Carnot groups}\label{carnot kernels}

Following \cite{folland} and \cite{folland_stein}, we now recall the notion of a 
\textit{kernel of type $\mu$} and some related properties, 
as outlined in Propositions \ref{kernel} and \ref{folland cont} below. 
For these results, we refer to Section 3.2 of \cite{BFP3}.
\begin{definition}\label{type} A kernel of type $\mu$ is a 
homogeneous distribution of degree $\mu-Q$
(with respect to group dilations),
that is smooth outside of the origin.

The convolution operator with a kernel of type $\mu$
is still called an operator of type $\mu$.
\end{definition}

\begin{proposition}\label{kernel}
Let $K\in\mc D'(\G)$ be a kernel of type $\mu$.
\begin{itemize}
\item[i)] $\ccheck K$ is again a kernel of type $\mu$;
\item[ii)] $WK$ and $KW $ are associated with  kernels of type $\mu-1$ for
any horizontal derivative $W$;
\item[iii)]  If $\mu>0$, then $K\in L^1_{\mathrm{loc}}(\G)$.
\end{itemize}
\end{proposition}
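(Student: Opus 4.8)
The plan is to dispatch the three items in turn, using only that a kernel of type $\mu$ is, by Definition~\ref{type}, a distribution $K\in\mc D'(\G)$ smooth on $\G\setminus\{0\}$ and $\delta_\lambda$-homogeneous of degree $\mu-Q$; I record homogeneity distributionally as $\langle K,\varphi\circ\delta_{1/\lambda}\rangle=\lambda^{\mu}\langle K,\varphi\rangle$ for every test function $\varphi$, the extra factor $\lambda^{Q}$ relative to the degree $\mu-Q$ coming from the Jacobian of $\delta_\lambda$. For item i) the key remark is that in exponential coordinates the group inversion is the linear map $p\mapsto -p$ (all higher Campbell--Hausdorff terms vanish on $p$ and $-p$), so it is a smooth diffeomorphism fixing the origin and commuting with every $\delta_\lambda$, since the dilations act linearly on each layer. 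Hence $\ccheck K$ is smooth off the origin, and pulling the homogeneity relation back through inversion shows $\ccheck K$ is again homogeneous of degree $\mu-Q$, i.e. a kernel of type $\mu$.

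For item ii) I would first treat $WK$. A left-invariant horizontal derivative $W$ is $\delta_\lambda$-homogeneous of degree $1$, and the standard identity $W(u\circ\delta_\lambda)=\lambda\,(Wu)\circ\delta_\lambda$ forces differentiation by $W$ to lower the homogeneity degree by $1$; applied to $K$ this gives degree $(\mu-1)-Q$. Smoothness off the origin is preserved because $W$ is a smooth differential operator, so $WK$ is a kernel of type $\mu-1$. For $KW$ — the kernel of ``convolve by $K$, then differentiate the input'' — I would invoke the transpose identities \eqref{convolutions var} together with the displayed formula following them, namely $\langle (X^If)\ast g,\psi\rangle=(-1)^{|I|}\langle f\ast\ccheck{(X^I\ccheck g)},\psi\rangle$ with $X^I=W$, which identifies the relevant kernel with $-\ccheck{(W\ccheck K)}$. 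Combining item i) (applied to $K$, then again at the end) with the case of $WK$ just settled shows $-\ccheck{(W\ccheck K)}$ is of type $\mu-1$ as well.

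The substantive point is item iii). Off the origin $K$ agrees with a smooth function $K_0$, and homogeneity yields the pointwise bound $|K_0(p)|\le C\,\|p\|^{\mu-Q}$: write $p=\delta_{\|p\|}p'$ with $\|p'\|=1$ and use that $K_0$ is bounded on the compact unit sphere. In homogeneous polar coordinates $dp$ scales like $r^{Q-1}\,dr$, so $\int_{\|p\|\le1}\|p\|^{\mu-Q}\,dp$ behaves like $\int_0^1 r^{\mu-1}\,dr$, which is finite exactly because $\mu>0$; thus $K_0\in L^1_{\mathrm{loc}}(\G)$. What remains — and what I expect to be the only delicate step — is to exclude a singular part of $K$ concentrated at the origin. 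The difference $K-K_0$ is a distribution supported at $\{0\}$, hence a finite combination $\sum_I c_I X^I\delta_0$, and each $X^I\delta_0$ is $\delta_\lambda$-homogeneous of degree $-Q-d(I)\le -Q$. Since $K-K_0$ is homogeneous of degree $\mu-Q>-Q$, and no nonzero combination of the $X^I\delta_0$ can have degree strictly above $-Q$, every coefficient $c_I$ must vanish. Therefore $K=K_0\in L^1_{\mathrm{loc}}(\G)$. The main obstacle is thus not the elementary convergence of the radial integral but this homogeneity-degree bookkeeping ruling out a distribution supported at the origin.
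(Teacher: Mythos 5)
Your proof is correct and is essentially the standard argument: the paper does not prove Proposition \ref{kernel} at all, referring instead to Folland, Folland--Stein and Section 3.2 of \cite{BFP3}, and your reasoning reconstructs the proof given in those sources (inversion is $p\mapsto -p$ in exponential coordinates and commutes with dilations; horizontal left-invariant derivatives lower the homogeneity degree by one, with the transpose identity handling $KW$). In particular, your treatment of iii) --- the pointwise bound $|K(p)|\le C\|p\|^{\mu-Q}$ away from the origin, integration in homogeneous polar coordinates, and the homogeneity-degree bookkeeping that rules out a point-supported part (every derivative of $\delta_0$ is homogeneous of degree at most $-Q$, whereas $K-K_0$ would be homogeneous of degree $\mu-Q>-Q$) --- is exactly the classical proof.
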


\begin{theorem}  \label{folland cont} We have:
\begin{itemize}
         \item[i)] Hausdorff-Young inequality holds, i.e.,  if $f\in L^p(\G)$, $g\in L^q(\G)$, $1\le p, q,r \le \infty$ and $\frac1p + \frac1q - 1 = \frac1r$, then $f\ast g\in L^r(\G)$
         (see \cite{folland_stein}, Proposition 1.18) .
	\item[ii)] If $K$ is a kernel of type $0$, $1<p<\infty$,  then the mapping $\, T:u\to u\ast K$ defined for $u\in\mc D(\G)$ extends to a bounded operator on $L^p(\G)$
	(see \cite{folland}, Theorem 4.9).
	\item[iii)] Suppose $0<\mu <Q$, $1<p<Q/\mu$ and $\frac{1}{q}=\frac{1}{p}-\frac{\mu}{Q}$. Let $K$ be a kernel of type $\mu$. If $u\in L^p(\G)$ the convolutions $u\ast K$ and $K\ast u$ exists a.e. and are in $L^q(\G)$ and there is a constant $C_p>0$ such that 
	$$
	\|u\ast K\|_q\le C_p\|u\|_p\quad \mathrm{and}\quad \| K\ast u\|_q\le C_p\|u\|_p\,
	$$
	(see \cite{folland}, Proposition 1.11).

\end{itemize} 
\end{theorem}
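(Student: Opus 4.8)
The plan is to treat the three parts separately, each resting on the facts that the Haar measure of $\G$ coincides with Lebesgue measure on $\mathbb{R}^n$, is bi-invariant (as $\G$ is nilpotent, hence unimodular) and inversion-invariant, and satisfies $d(\delta_\lambda p)=\lambda^Q\,dp$ under dilations, where $Q$ is the homogeneous dimension. Part i) then transfers the Euclidean argument verbatim. Writing $f\ast g(p)=\int f(q)\,g(q^{-1}p)\,dq$, bi-invariance of the Haar measure lets one either apply H\"older's inequality to $|f(q)|\,|g(q^{-1}p)|$ with the three exponents forced by $\tfrac1p+\tfrac1q-1=\tfrac1r$, or interpolate (Riesz--Thorin) between the elementary endpoint bounds $L^1\ast L^1\to L^1$, $L^1\ast L^\infty\to L^\infty$ and their transposes. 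No structure beyond unimodularity is used.

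For iii) I would reduce to the model kernel. As $K$ is homogeneous of degree $\mu-Q$ and smooth off the origin, $|K(p)|\le C\|p\|^{\mu-Q}$, and $K\in L^1_{\mathrm{loc}}(\G)$ by Proposition~\ref{kernel}~iii). The Hardy--Littlewood--Sobolev estimate then follows by the classical splitting: writing $u\ast K(p)=\int u(py^{-1})K(y)\,dy$ and cutting at $\{\|y\|\le r\}$ versus $\{\|y\|>r\}$, a dyadic decomposition bounds the near part by $C\,r^{\mu}\,Mu(p)$ (using $\int_{\|y\|\le r}\|y\|^{\mu-Q}\,dy\sim r^{\mu}$, where $Mu$ is the Hardy--Littlewood maximal function), while H\"older bounds the far part by $C\,r^{\mu-Q/p}\,\|u\|_p$ (the condition $p<Q/\mu$ is exactly what makes $\|y\|^{(\mu-Q)p'}$ integrable over $\{\|y\|>r\}$). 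Optimizing in $r$ yields the pointwise bound
\[
|u\ast K(p)|\le C\,(Mu(p))^{p/q}\,\|u\|_p^{1-p/q},
\]
and since $(\G,d,dp)$ is doubling, $M$ is bounded on $L^p$ for $p>1$; raising to the power $q$ and integrating gives $\|u\ast K\|_q\le C\|u\|_p$. The estimate for $K\ast u$ follows from the identity $(K\ast u)\ccheck=\ccheck u\ast\ccheck K$ together with inversion-invariance of the Haar measure and Proposition~\ref{kernel}~i), which guarantees that $\ccheck K$ is again of type $\mu$.

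For ii), a kernel of type $0$ is homogeneous of degree $-Q$, hence not locally integrable, and $T$ is a Calder\'on--Zygmund operator. I would verify the size bound $|K(p)|\le C\|p\|^{-Q}$ and the H\"ormander condition $\int_{\|p\|\ge c\|q\|}|K(q^{-1}p)-K(p)|\,dp\le C$, both consequences of smoothness and homogeneity via the mean-value inequality for $\|\cdot\|$; the cancellation making the principal value exist is automatic for a homogeneous distribution of degree $-Q$ that is a genuine element of $\mc D'(\G)$. Granting an $L^2$ bound, the weak $(1,1)$ estimate of Calder\'on--Zygmund theory on spaces of homogeneous type, together with Marcinkiewicz interpolation and duality, gives boundedness on $L^p$ for all $1<p<\infty$.

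The main obstacle is part ii). Parts i) and iii) are governed by size estimates alone---Young's inequality is elementary and the fractional integral collapses to the maximal function---whereas ii) requires the full singular-integral machinery, and in particular the $L^2$ bound. In the non-commutative setting this cannot be read off from an elementary Fourier computation but rests on the Plancherel theorem for $\G$ (or on a $T(1)$/square-function argument), which is the genuinely nontrivial input and is precisely why one invokes Folland's Theorem~4.9 rather than reproving it here.
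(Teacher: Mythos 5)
The paper offers no proof of this theorem at all: it is stated as a compendium of classical results, with each item delegated to the literature (Young's inequality to Folland--Stein, Proposition 1.18; kernels of type $0$ to Folland, Theorem 4.9; kernels of type $\mu$ to Folland, Proposition 1.11). Your proposal therefore does something genuinely different for parts i) and iii): it reconstructs self-contained proofs, and those reconstructions are correct. For i), H\"older/Riesz--Thorin plus unimodularity of $\G$ is exactly what is needed. For iii), the Hedberg splitting is sound: the near-part bound $C r^{\mu} Mu(p)$ via dyadic annuli, the far-part bound $C r^{\mu - Q/p}\|u\|_p$ (where $p<Q/\mu$ is precisely the integrability condition you identify), the optimization giving $|u\ast K(p)|\le C (Mu(p))^{p/q}\|u\|_p^{1-p/q}$, the $L^p$-boundedness of the maximal function on the Ahlfors-regular space $(\G,d,dp)$, and the reduction of $K\ast u$ to $\ccheck u \ast \ccheck K$ using Proposition \ref{kernel} i) and inversion-invariance of Haar measure are all valid in any homogeneous group. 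What this buys is a note that would be self-contained for i) and iii), at the cost of a page of standard real analysis; what the paper's citations buy is brevity, which is reasonable since these are textbook facts. For part ii), however, you have not actually produced a proof: the size and H\"ormander estimates and the mean-value-zero property are fine, but the whole weight of the statement sits in the $L^2$ bound, which you explicitly grant and refer back to Folland --- so on this item your route and the paper's are ultimately the same citation, and your closing paragraph correctly identifies why a genuinely independent proof would require Plancherel-type or $T(1)$/almost-orthogonality machinery that neither you nor the paper develops.
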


\begin{lemma}(see Lemma 3.5 in \cite{BFP2}) \label{truncation} Suppose $0< \mu<Q$.
If $K$ is a kernel of type $\mu$
and $\psi \in \mc D(\G)$, $\psi\equiv 1$ in a neighborhood of the origin, then
the statement iii) of Theorem \ref{folland cont} still
holds if we replace $K$ by $\psi K$ or $(1-\psi )K$. 

Analogously, if $K$ is a kernel of type 0 and $\psi \in \mc D(\G)$,
then statement ii) of the same theorem still
holds if we replace $K$ by $\psi K$ or $(\psi-1) K$.
\end{lemma}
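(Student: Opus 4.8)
The plan is to prove the two assertions separately, in each case writing the truncated kernel as a sum of a genuinely singular piece, governed by Theorem \ref{folland cont}, and an integrable or smooth compactly supported remainder, handled by Young's inequality (Theorem \ref{folland cont}, i)). For the first assertion, I would fix a smooth homogeneous norm $N$ on $\G$ and use that homogeneity of degree $\mu-Q$ together with smoothness off the origin gives the pointwise bound $|K(x)|\le C\,N(x)^{\mu-Q}$ for $x\neq 0$; since $\psi$ and $1-\psi$ are bounded, the same bound (with a larger constant) holds for $\psi K$ and $(1-\psi)K$. One then dominates the convolutions pointwise: for $u\in L^p(\G)$,
$$
|u\ast(\psi K)(x)|\le \int_\G |u(y)|\,|(\psi K)(y^{-1}x)|\,dy\le C\,\big(|u|\ast N^{\mu-Q}\big)(x),
$$
and likewise for $(1-\psi)K$. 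Because $N^{\mu-Q}$ is a positive kernel of type $\mu$, Theorem \ref{folland cont}, iii), applied to $N^{\mu-Q}$ yields $\||u|\ast N^{\mu-Q}\|_q\le C_p\|u\|_p$, so the desired $L^p\to L^q$ estimate for $\psi K$ and $(1-\psi)K$ follows immediately.

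For the second assertion, I would first note that $(\psi-1)K=\psi K-K$; since convolution with the type-$0$ kernel $K$ is bounded on $L^p$ by Theorem \ref{folland cont}, ii), it suffices to treat $\psi K$. Choosing $\chi\in\mc D(\G)$ with $\chi\equiv 1$ near the origin, I decompose
$$
\psi K=\psi(0)\,\chi K+\chi\big(\psi-\psi(0)\big)K+(1-\chi)\,\psi K.
$$
The last summand is smooth, since the factor $1-\chi$ cancels the singularity at the origin, and compactly supported, hence lies in $L^1(\G)$. For the middle summand, using $|\psi(x)-\psi(0)|\le C\|x\|_{\mathrm{Euc}}\le C\|x\|$ on the support of $\chi$ together with $|K(x)|\le C\|x\|^{-Q}$, one gets a bound $C\|x\|^{1-Q}$ near the origin, which is integrable, so this term is compactly supported and in $L^1(\G)$ as well. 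For both of these, Young's inequality gives boundedness on $L^p$.

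It remains to show that $\chi K$ yields a bounded operator on $L^p(\G)$, $1<p<\infty$, and this is the crux. The kernel $\chi K$ is no longer homogeneous, so Theorem \ref{folland cont}, ii), does not apply directly; instead I would verify that $\chi K$ is a Calder\'on--Zygmund kernel on the homogeneous group $\G$ and invoke the corresponding $L^p$-boundedness theorem. The size bound $|\chi K(x)|\le C\|x\|^{-Q}$ and the regularity bound $|W(\chi K)(x)|\le C\|x\|^{-Q-1}$ for horizontal derivatives $W$ follow from $K$ being of type $0$, whence $WK$ is of type $-1$ by Proposition \ref{kernel}, ii), and from $\chi\in\mc D(\G)$. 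The cancellation condition, that $\sup_{0<r<R}\big|\int_{r<\|x\|<R}\chi K\,dx\big|<\infty$, is inherited from $K$: on annuli contained in the region where $\chi\equiv 1$ the integrand equals $K$, whose mean-value condition holds because $K$ defines a bounded operator (Theorem \ref{folland cont}, ii)), while the remaining part is the integral of a smooth compactly supported function and is thus uniformly bounded. The Calder\'on--Zygmund theorem for homogeneous groups then gives the $L^p$-boundedness of $\chi K$, and recombining the three summands finishes the argument.

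The main obstacle is precisely this last step. For the type-$\mu$ case the truncation is harmless because positive domination plus the fractional-integration estimate of Theorem \ref{folland cont}, iii), absorbs it. In the type-$0$ case, however, truncation destroys homogeneity while leaving the non-integrable $\|x\|^{-Q}$ tail, so a naive estimate fails; one genuinely needs the cancellation of $K$ to persist after truncation, which is why $L^p$-boundedness must be recovered through the Calder\'on--Zygmund machinery rather than by a direct appeal to Theorem \ref{folland cont}.
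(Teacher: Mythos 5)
The paper does not actually prove this lemma: it is quoted verbatim from \cite{BFP2}, Lemma 3.5, so your argument can only be measured against the standard proof. For the type-$\mu$ part ($0<\mu<Q$) your argument is that proof: multiplication by $\psi$ or $1-\psi$ preserves the size bound $|K(x)|\le C\|x\|^{\mu-Q}$, and the $L^p\to L^q$ estimate of Theorem \ref{folland cont}, iii) depends on nothing but this bound --- either in your formulation (dominate pointwise by the positive type-$\mu$ kernel $N^{\mu-Q}$ and apply iii) to it) or by rerunning Folland's proof. This part is correct as written.

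For the type-$0$ part, your strategy (reduce to $\psi K$, split off integrable pieces, and recognize that the remaining compactly supported truncation requires cancellation rather than size bounds) is the right one, and your closing remark that no Young-type estimate can possibly work here identifies the real difficulty. Two steps, however, need repair. First, a kernel of type $0$ is a distribution, not a function: by the structure theorem for homogeneous distributions of degree $-Q$ (see \cite{folland_stein}, Ch.~6), one has $K=\mathrm{PV}(k)+c\,\delta$, where $\delta$ is the Dirac mass at the identity and $k$ is smooth away from the origin, homogeneous of degree $-Q$, with vanishing mean on the unit sphere. Your pointwise manipulations ($|\chi K(x)|\le C\|x\|^{-Q}$, ``the factor $1-\chi$ cancels the singularity'', the annulus integrals) are legitimate only for $k$, after the delta term has been split off; since $\psi\delta=\psi(0)\delta$, that term convolves to a harmless multiple of the identity, so this is easily fixed but must be said. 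Second, and more seriously, your justification of the cancellation condition --- that the mean-value property of $K$ ``holds because $K$ defines a bounded operator'' --- is circular (uniform boundedness of truncations is essentially what you are proving) and in any case backwards: the vanishing mean of $k$ on spheres, hence on annuli, is part of the structure theorem just quoted, and should simply be cited; once it is, your uniform bound on $\int_{r<\|x\|<R}\chi k\,dx$ goes through exactly as you sketch. Finally, note that the Calder\'on--Zygmund theorem you invoke at the end is not among the tools quoted in this paper (Theorem \ref{folland cont}, Proposition \ref{kernel}), so it needs an external reference (e.g.\ the $L^p$-boundedness of truncated singular integrals on homogeneous groups, Folland--Stein \cite{folland_stein}); alternatively, having the structure theorem in hand, you could observe that $\psi k-\psi(0)\,k\,\mathbf{1}_{\{\|x\|<1\}}$ is in $L^1(\G)$ and conclude from the classical uniform boundedness of sharply truncated singular integrals, rather than re-verifying the CZ axioms for $\chi k$. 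With these repairs your proof is sound.
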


\begin{definition} Let $f$ be a measurable function on $\G$. If $t>0$ we set
$$
\lambda_f(t) = |\{|f|>t\}|.
$$
If $1\le r\le\infty$ and
$$
 \sup_{t>0}t^r \lambda_f(t)  <\infty,
$$
we say that $f\in L^{r,\infty}(\G)$.
\end{definition}

\begin{definition}\label{M}
Following \cite{BBD}, Definition A.1, if $1<r<\infty$, we set 
$$
\| u\|_{M^r} : = \inf \{C\ge 0 \, ; \, \int_K |u| \, dx \le C |K|^{1/r'}\;
\mbox{for all $L$-measurable set $K\subset \G$}\}.
$$
and $M^r = M^r(\G)$ is the set of measurable functions $u$ on $\G$ satisfying $\| u\|_{M^r}<\infty$.
\end{definition}

Repeating verbatim the arguments of \cite{BBD}, Lemma A.2, we obtain

\begin{lemma}\label{marc alternative} If $1<r<\infty$, then
$$
\dfrac{(r-1)^r}{r^{r+1}}  \| u\|_{M^r }^r  \le \sup_{t >0} \{t^r | \{|u|>t\} |\, \} \le  \| u\|_{M^r }^r.
$$
In particular, if $1<r<\infty$, then  $M^r  = L^{r,\infty}(\G)$.
\end{lemma}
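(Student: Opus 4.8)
The plan is to prove the two displayed inequalities separately and then read off the identification $M^r=L^{r,\infty}(\G)$. Throughout I write $A:=\sup_{t>0}t^r\lambda_u(t)$ and $M:=\|u\|_{M^r}$, and recall that $r'$ denotes the conjugate exponent, so that $1-\tfrac1{r'}=\tfrac1r$. The right-hand inequality $A\le M^r$ is the easy direction: for fixed $t>0$ I would apply the defining inequality of $\|\cdot\|_{M^r}$ to the superlevel set $K=\{|u|>t\}$ (passing first to finite-measure subsets of $K$ to avoid dividing by an infinite measure, then letting their measures increase to $\lambda_u(t)$). Since $|u|>t$ on $K$, this gives $t\,\lambda_u(t)\le\int_K|u|\,dx\le M\,\lambda_u(t)^{1/r'}$, and dividing by $\lambda_u(t)^{1/r'}$ yields $t\,\lambda_u(t)^{1/r}\le M$, i.e. $t^r\lambda_u(t)\le M^r$. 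Taking the supremum over $t$ gives $A\le M^r$.

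The left-hand inequality is where the real work lies: I must bound $\int_K|u|\,dx$ by a multiple of $A^{1/r}|K|^{1/r'}$ for an \emph{arbitrary} measurable set $K$ of finite measure. I would fix a cut level $s>0$ and split $\int_K|u|\,dx=\int_{K\cap\{|u|\le s\}}|u|\,dx+\int_{K\cap\{|u|>s\}}|u|\,dx$, bounding the first term crudely by $s|K|$. For the second term I would enlarge the domain to $\{|u|>s\}$ and use the layer-cake formula together with the weak bound $\lambda_u(t)\le A\,t^{-r}$:
$$
\int_{\{|u|>s\}}|u|\,dx=s\,\lambda_u(s)+\int_s^\infty\lambda_u(t)\,dt\le A\,s^{1-r}+\frac{A}{r-1}\,s^{1-r}=\frac{r}{r-1}\,A\,s^{1-r}.
$$
This produces the estimate $\int_K|u|\,dx\le s|K|+\frac{r}{r-1}A\,s^{1-r}$, valid for every $s>0$. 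Minimizing the right-hand side over $s$ (the optimum is $s=(rA/|K|)^{1/r}$) gives $\int_K|u|\,dx\le r^{1/r}\,\frac{r}{r-1}\,A^{1/r}|K|^{1/r'}$. Since $K$ is arbitrary this means $M\le r^{1/r}\,\frac{r}{r-1}\,A^{1/r}$, which rearranges exactly to $\frac{(r-1)^r}{r^{r+1}}M^r\le A$.

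Combining the two inequalities shows that $M$ and $A^{1/r}$ are finite simultaneously and are comparable, with the explicit constants above; since $A=\|u\|_{L^{r,\infty}(\G)}^r$ by definition, this yields the set equality $M^r=L^{r,\infty}(\G)$ and completes the argument. The main obstacle is the left-hand inequality, and specifically pinning down the constant: the crude bound $s|K|$ on the low part forces one to optimize the threshold $s$, and it is precisely this optimization (rather than splitting at the exact crossing level $(A/|K|)^{1/r}$, which would give the slightly better constant $\tfrac{(r-1)^r}{r^r}$) that reproduces the constant $\frac{(r-1)^r}{r^{r+1}}$ appearing in the statement. The only remaining subtleties are the harmless measure-theoretic reductions in the easy direction, namely the passage to finite-measure subsets and the trivial case $\lambda_u(t)=0$.
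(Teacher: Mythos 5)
Your proof is correct, and since the paper offers no actual argument here — it merely says the lemma follows by "repeating verbatim" the proof of [BBD, Lemma A.2] — your split-and-optimize argument, which reproduces exactly the constant $\frac{(r-1)^r}{r^{r+1}}$ of the statement, is in substance the very argument the paper is invoking: the easy direction via $K=\{|u|>t\}$ (with the finite-measure-subset reduction), and the hard direction via $\int_K|u|\le s|K|+\frac{r}{r-1}A s^{1-r}$ followed by minimization in $s$.

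One small inaccuracy in your closing parenthetical: within your scheme (bounding the low part by $s|K|$ and enlarging the high part to $\{|u|>s\}$), splitting at the crossing level $s=(A/|K|)^{1/r}$ does \emph{not} give the better constant $\frac{(r-1)^r}{r^r}$ — it gives $1+\frac{r}{r-1}=\frac{2r-1}{r-1}$, hence the \emph{worse} constant $\frac{(r-1)^r}{(2r-1)^r}$; the sharper constant $\frac{(r-1)^r}{r^r}$ instead comes from writing $\int_K|u|\,dx=\int_0^\infty|K\cap\{|u|>t\}|\,dt\le\int_0^\infty\min\bigl(|K|,A t^{-r}\bigr)\,dt$, i.e.\ from keeping the intersection with $K$ in the tail term rather than discarding it. This affects only that side remark, not the validity of your proof.
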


\begin{corollary}\label{marc alternative coroll}  If $1\le q <r$, then $M^r \subset L^q_{\mathrm{loc}} (\G)\subset L^1_{\mathrm{loc}} (\G)$.

\end{corollary}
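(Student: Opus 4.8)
The plan is to reduce everything to the standard weak-$L^r$ embedding, using Lemma \ref{marc alternative} to identify $M^r = L^{r,\infty}(\G)$. The second inclusion is immediate and I would dispatch it first: for any bounded measurable set $K\subset\G$ and any $u\in L^q_{\mathrm{loc}}(\G)$ with $q\ge 1$, Hölder's inequality gives
$$
\int_K|u|\,dx \le |K|^{1-1/q}\Big(\int_K|u|^q\,dx\Big)^{1/q}<\infty,
$$
so that $u\in L^1_{\mathrm{loc}}(\G)$.

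For the first inclusion I would fix $u\in M^r=L^{r,\infty}(\G)$ and a bounded measurable set $K$, and set $M:=\sup_{t>0}t^r\lambda_u(t)<\infty$. The key tool is the layer-cake representation
$$
\int_K |u|^q\,dx = q\int_0^\infty t^{q-1}\,|\{x\in K: |u(x)|>t\}|\,dt.
$$
Since $|\{x\in K:|u(x)|>t\}|\le\min\{|K|,\lambda_u(t)\}\le\min\{|K|,\,M t^{-r}\}$, I would split the integral at the crossover point $t_0:=(M/|K|)^{1/r}$. On $(0,t_0)$ the distribution function is bounded by $|K|$, contributing $|K|\,t_0^{q}$; on $(t_0,\infty)$ it is bounded by $Mt^{-r}$, contributing $\frac{q}{r-q}\,M\,t_0^{q-r}$. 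Both terms are finite, whence $\int_K|u|^q\,dx<\infty$ and $u\in L^q_{\mathrm{loc}}(\G)$.

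The only place where the hypothesis $q<r$ is actually used—and the only point requiring any care—is the convergence of the tail integral $\int_{t_0}^\infty t^{q-1-r}\,dt$, which needs $q-r<0$. This is precisely the sharp threshold (for $q\ge r$ the embedding fails), so I do not expect any deeper obstacle: the statement is a routine consequence of the weak-$L^r$ bound together with the finiteness of $|K|$.
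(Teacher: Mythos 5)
Your proof is correct. The layer-cake representation, the bound $|\{x\in K:|u(x)|>t\}|\le\min\{|K|,\,Mt^{-r}\}$, the split at $t_0=(M/|K|)^{1/r}$, and the identification of $q<r$ as exactly what makes the tail $\int_{t_0}^\infty t^{q-1-r}\,dt$ converge are all sound, as is the H\"older step for $L^q_{\mathrm{loc}}\subset L^1_{\mathrm{loc}}$. The paper, however, takes a shorter and computation-free route: since $\lambda_{|u|^q}(t)=\lambda_u(t^{1/q})$, one has $\sup_{t>0} t^{r/q}\lambda_{|u|^q}(t)=\sup_{s>0}s^{r}\lambda_u(s)<\infty$, so Lemma \ref{marc alternative} (used in both directions, and requiring $r/q>1$, which is where the hypothesis $q<r$ enters there) gives $|u|^q\in M^{r/q}$; then the defining inequality in Definition \ref{M}, applied to $|u|^q$ over a bounded measurable set $K$, yields $\int_K|u|^q\,dx\le \| |u|^q\|_{M^{r/q}}\,|K|^{1-q/r}<\infty$ immediately. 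What each approach buys: the paper's argument is a two-line reduction that recycles the quantitative equivalence $M^r=L^{r,\infty}$ it has already recorded, whereas yours is self-contained at the level of the weak bound --- in effect you reprove, with the explicit constant $\int_K|u|^q\,dx\le\bigl(1+\tfrac{q}{r-q}\bigr)M^{q/r}|K|^{1-q/r}$, precisely the statement that $|u|^q\in M^{r/q}$, which is the content the paper delegates to Lemma \ref{marc alternative} (itself proved in \cite{BBD}, Lemma A.2, by essentially your crossover computation). So the two arguments coincide at bottom; they differ in which part is quoted and which is carried out by hand, and yours has the minor advantage of producing an explicit quantitative bound.
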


\begin{proof} By Lemma \ref{marc alternative}, if $u\in M^r$ then $|u|^q\in M^{r/q}$, and we can conclude
thanks to Definition \ref{M}.

\end{proof}

\begin{lemma}\label{convolutions 2} Let $K$ be a kernel of type $\mu\in (0,Q)$. Then for all $f\in L^1(\G)$ we have $f\ast K\in M^{Q/(Q-\mu)} $
and there exists $C>0$ such that 
$$
 \| f\ast K\|_{M^{Q/(Q-\mu)}} \le C\|f \|_{L^1({\G})}
   $$
for all $f\in L^1(\G)$. In particular, by Corollary \ref{marc alternative coroll}, 
if $1\le q < Q/(Q-\mu)$, then $f\ast K\in
L^{q}_{\mathrm{loc}}(\G) \subset  L^1_{\mathrm{loc}}(\G)$.
\end{lemma}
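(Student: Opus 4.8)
The plan is to reduce the statement to a single pointwise bound on the kernel and then transfer it to the convolution by Tonelli's theorem and the left-invariance of Haar measure. Throughout write $r:=Q/(Q-\mu)$, so that $1/r'=1-1/r=\mu/Q$, and recall from Lemma \ref{marc alternative} that for $1<r<\infty$ the space $M^r$ coincides with $L^{r,\infty}(\G)$. The first step is to record that a kernel of type $\mu$ satisfies $|K(x)|\le C_0\|x\|^{\mu-Q}$ for all $x\neq 0$: writing $x=\delta_{\|x\|}(x')$ with $\|x'\|=1$ and using homogeneity of degree $\mu-Q$ gives $K(x)=\|x\|^{\mu-Q}K(x')$, while $|K(x')|$ is bounded on the compact unit sphere since $K$ is smooth away from the origin. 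Since $\mu>0$, Proposition \ref{kernel} iii) guarantees $K\in L^1_{\mathrm{loc}}(\G)$, so all the integrals below make sense.

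Second, I would show $K\in L^{r,\infty}(\G)$ by estimating its distribution function. From $|K(x)|\le C_0\|x\|^{\mu-Q}$ and $\mu-Q<0$ we get $\{|K|>t\}\subset B(e,(C_0/t)^{1/(Q-\mu)})$; since the volume of a homogeneous ball of radius $\rho$ is a fixed multiple of $\rho^{Q}$, this yields $\lambda_K(t)\le C\,t^{-Q/(Q-\mu)}=C\,t^{-r}$, hence $\sup_{t>0}t^r\lambda_K(t)<\infty$. By Lemma \ref{marc alternative} this gives $\|K\|_{M^r}\le C<\infty$; equivalently, by Definition \ref{M}, $\int_F|K(y)|\,dy\le C\,|F|^{1/r'}$ for every measurable $F\subset\G$, with the same constant $C$.

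Third comes the convolution estimate. For any measurable $E\subset\G$ of finite measure, Tonelli's theorem gives
\[
\int_E|f\ast K(x)|\,dx\le \int_\G |f(q)|\Big(\int_E |K(q^{-1}x)|\,dx\Big)\,dq;
\]
the inner integral equals $\int_{q^{-1}E}|K(y)|\,dy$ after the substitution $y=q^{-1}x$ and the left-invariance of Haar measure, and since $|q^{-1}E|=|E|$ the bound of the second step yields $\int_E |K(q^{-1}x)|\,dx\le C|E|^{1/r'}$ uniformly in $q$. Therefore $\int_E|f\ast K|\,dx\le C\,|E|^{1/r'}\,\|f\|_{L^1(\G)}$ for every such $E$, which is precisely $\|f\ast K\|_{M^r}\le C\|f\|_{L^1(\G)}$ by Definition \ref{M}; in particular $|f|\ast|K|<\infty$ a.e., so $f\ast K$ is well defined. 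The final assertion is immediate: by Corollary \ref{marc alternative coroll}, $M^r\subset L^q_{\mathrm{loc}}(\G)\subset L^1_{\mathrm{loc}}(\G)$ for $1\le q<r$, so $f\ast K\in L^q_{\mathrm{loc}}(\G)$.

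The only genuinely delicate point is the pointwise bound $|K(x)|\le C_0\|x\|^{\mu-Q}$, which rests on the precise meaning of homogeneity of degree $\mu-Q$ for the distribution $K$ together with its smoothness off the origin and the compactness of the unit sphere; once this is in hand, the remaining steps are the standard endpoint argument for fractional integrals (the $L^1\to L^{r,\infty}$ Hardy--Littlewood--Sobolev estimate), phrased here through the Morrey space $M^r$ precisely so that the constants stay uniform over sets $E$.
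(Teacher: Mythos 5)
Your proof is correct. There is, however, nothing in the paper to compare it against: Lemma \ref{convolutions 2} is stated without proof, sitting in the appendix of kernel results whose surrounding machinery (Definition \ref{M}, Lemma \ref{marc alternative}, Corollary \ref{marc alternative coroll}) is imported from \cite{BBD} and \cite{BFP3}. Your argument is precisely the one that machinery is designed to support, so it reads as the intended proof rather than a genuinely different route. The three ingredients are all sound: (a) the pointwise bound $|K(x)|\le C_0\|x\|^{\mu-Q}$, obtained from distributional homogeneity plus smoothness off the origin and compactness of the unit sphere of the homogeneous norm, gives $\lambda_K(t)\le C t^{-r}$ with $r=Q/(Q-\mu)$, hence $K\in L^{r,\infty}(\G)=M^r$ by Lemma \ref{marc alternative}; (b) your appeal to Proposition \ref{kernel} iii) is not cosmetic — it is what rules out a singular part of the distribution $K$ concentrated at the origin, so that $K$ really is the $L^1_{\mathrm{loc}}$ function satisfying the pointwise bound a.e.; (c) the Tonelli/left-invariance step $\int_E|f\ast K|\,dx\le \|f\|_{L^1}\sup_q\int_{q^{-1}E}|K|\,dy\le \|K\|_{M^r}\|f\|_{L^1}|E|^{1/r'}$ is exactly the reason the $M^r$ norm is used in place of the weak-$L^r$ quasinorm: being defined by genuine integral inequalities uniform over sets, it is stable under convolution with $L^1$ densities, whereas the quasinorm admits no such direct Minkowski-type argument. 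The concluding local embedding via Corollary \ref{marc alternative coroll} is as in the paper's statement.
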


As in \cite{BFP3}, Remark 3.10, we have:

\begin{remark}\label{truncation 2} Suppose $0< \mu<Q$.
If $K$ is a kernel of type $\mu$
and $\psi \in \mc D(\G)$, $\psi\equiv 1$ in a neighborhood of the origin, then
the statements of Lemma \ref{convolutions 2} still
hold if we replace $K$ by $(1-\psi )K$ or by $\psi K$. 

\end{remark}

\section*{Acknowledgements}

A.B. and B.F. are supported by the University of Bologna, funds for selected research topics.
A.B. is supported by PRIN 2022 F4F2LH - CUP J53D23003760006 ``Regularity problems in sub-Riemannian structures'', and by GNAMPA of INdAM (Istituto Nazionale di Alta Matematica ``F. Severi''), Italy

P.P. is supported by Agence Nationale de la Recherche, ANR-22-CE40-0004 GOFR.

\bibliographystyle{amsplain}

\bibliography{BFP7_final}

\providecommand{\bysame}{\leavevmode\hbox to3em{\hrulefill}\thinspace}
\providecommand{\MR}{\relax\ifhmode\unskip\space\fi MR }
\providecommand{\MRhref}[2]{%
  \href{http://www.ams.org/mathscinet-getitem?mr=#1}{#2}
}
\providecommand{\href}[2]{#2}
\begin{thebibliography}{10}

\bibitem{BFP3}
Annalisa Baldi, Bruno Franchi, and Pierre Pansu, \emph{{$L^1$}-{P}oincar\'{e}
  inequalities for differential forms on {E}uclidean spaces and {H}eisenberg
  groups}, Adv. Math. \textbf{366} (2020), 107084. \MR{4070308}

\bibitem{BFP2}
\bysame, \emph{Poincar\'{e} and {S}obolev inequalities for differential forms
  in {H}eisenberg groups and contact manifolds}, J. Inst. Math. Jussieu
  \textbf{21} (2022), no.~3, 869--920. \MR{4404128}

\bibitem{BFTT}
Annalisa Baldi, Bruno Franchi, Nicoletta Tchou, and Maria~Carla Tesi,
  \emph{Compensated compactness for differential forms in {C}arnot groups and
  applications}, Adv. Math. \textbf{223} (2010), no.~5, 1555--1607.

\bibitem{BBD}
Philippe Benilan, Haim Brezis, and Michael~G. Crandall, \emph{A semilinear
  equation in {$L^{1}(R^{N})$}}, Ann. Scuola Norm. Sup. Pisa Cl. Sci. (4)
  \textbf{2} (1975), no.~4, 523--555. \MR{0390473}

\bibitem{BLU}
Andrea Bonfiglioli, Ermanno Lanconelli, and Francesco Uguzzoni,
  \emph{Stratified {L}ie groups and potential theory for their
  sub-{L}aplacians}, Springer Monographs in Mathematics, Springer, Berlin,
  2007. \MR{MR2363343}

\bibitem{bourbaki}
Nicolas Bourbaki, \emph{\'{E}l\'ements de math\'ematique. {XXVI}. {G}roupes et
  alg\`ebres de {L}ie. {C}hapitre 1: {A}lg\`ebres de {L}ie}, Actualit\'es Sci.
  Ind. No. 1285. Hermann, Paris, 1960. \MR{MR0132805 (24 \#A2641)}

\bibitem{C}
Michael Cowling, private communication, May 2024.

\bibitem{federer}
Herbert Federer, \emph{Geometric measure theory}, Die Grundlehren der
  mathematischen Wissenschaften, Band 153, Springer-Verlag New York Inc., New
  York, 1969. \MR{MR0257325 (41 \#1976)}

\bibitem{folland}
Gerald~B. Folland, \emph{Subelliptic estimates and function spaces on nilpotent
  {L}ie groups}, Ark. Mat. \textbf{13} (1975), no.~2, 161--207. \MR{MR0494315
  (58 \#13215)}

\bibitem{folland_stein}
Gerald~B. Folland and Elias~M. Stein, \emph{Hardy spaces on homogeneous
  groups}, Mathematical Notes, vol.~28, Princeton University Press, Princeton,
  N.J., 1982. \MR{MR657581 (84h:43027)}

\bibitem{FSSC_step2}
Bruno Franchi, Raul Serapioni, and Francesco Serra~Cassano, \emph{On the
  structure of finite perimeter sets in step 2 {C}arnot groups}, J. Geom. Anal.
  \textbf{13} (2003), no.~3, 421--466. \MR{MR1984849 (2004i:49085)}

\bibitem{FS2}
Bruno Franchi and Raul~Paolo Serapioni, \emph{Intrinsic {L}ipschitz graphs
  within {C}arnot groups}, J. Geom. Anal. \textbf{26} (2016), no.~3,
  1946--1994. \MR{3511465}

\bibitem{IL}
Tadeusz Iwaniec and Adam Lutoborski, \emph{Integral estimates for null
  {L}agrangians}, Arch. Rational Mech. Anal. \textbf{125} (1993), no.~1,
  25--79. \MR{MR1241286 (95c:58054)}

\bibitem{mitrea_mitrea_monniaux}
Dorina Mitrea, Marius Mitrea, and Sylvie Monniaux, \emph{The {P}oisson problem
  for the exterior derivative operator with {D}irichlet boundary condition in
  nonsmooth domains}, Commun. Pure Appl. Anal. \textbf{7} (2008), no.~6,
  1295--1333. \MR{2425010}

\bibitem{rumin_grenoble}
Michel Rumin, \emph{Around heat decay on forms and relations of nilpotent {L}ie
  groups}, S\'eminaire de Th\'eorie Spectrale et G\'eom\'etrie, Vol. 19,
  Ann\'ee 2000--2001, S\'emin. Th\'eor. Spectr. G\'eom., vol.~19, Univ.
  Grenoble I, 2001, pp.~123--164. \MR{MR1909080 (2003f:58062)}

\bibitem{laurent_schwartz}
Laurent Schwartz, \emph{Th\'{e}orie des distributions}, Publications de
  l'Institut de Math\'{e}matique de l'Universit\'{e} de Strasbourg, IX-X,
  Hermann, Paris, 1966, Nouvelle \'{e}dition, enti\'{e}rement corrig\'{e}e,
  refondue et augment\'{e}e. \MR{209834}

\end{thebibliography}

\bigskip
\tiny{
\noindent
Annalisa Baldi and Bruno Franchi 
\par\noindent
Universit\`a di Bologna, Dipartimento
di Matematica\par\noindent Piazza di
Porta S.~Donato 5, 40126 Bologna, Italy.
\par\noindent
e-mail:
annalisa.baldi2@unibo.it, 
bruno.franchi@unibo.it.
}

\medskip

\tiny{
\noindent
Pierre Pansu 
\par\noindent  Universit\'e Paris-Saclay, CNRS, Laboratoire de math\'ematiques d'Orsay
\par\noindent  91405, Orsay, France.
\par\noindent 
e-mail: pierre.pansu@universite-paris-saclay.fr
}

\end{document}